\newtheorem{Theorem}{Theorem}[section]
\newtheorem{Lemma}{Lemma}[section]
\newtheorem{Proposition}{Proposition}[section]
\newtheorem{Definition}{Definition}[section]
\newtheorem{Assumption}{Assumption}[section]
\begin{document}

\begin{center}
\noindent {\bf\Large A Functional Version of the ARCH Model}
\\[3ex]
{\large Siegfried H\"ormann$^{1,2}$\footnotetext[1]{
Corresponding author. E-mail address: \url{shormann@ulb.ac.be}}
\footnotetext[2]{
Research partially supported by the Banque National de Belgique and
Communaut\'e fran\c{c}aise de Belgique - Actions de Recherche Concert\'ees (2010--2015).}
, Lajos Horv\'ath$^3$, and Ron Reeder$^3$\footnotetext[3]{Research partially
supported by NSF grant DMS 0905400.}
}\bigskip

{\small
$^1$ D\'epartment de Math\'ematique,
Universit\'e Libre de Bruxelles,
CP 215,
Boulevard du Triomphe,
B-1050 Bruxelles,
Belgium\\
$^3$ Department of Mathematics, University of Utah, Salt Lake City, UT-84112-0090, USA}

\end{center}

\begin{small}
\noindent{\bf Abstract} \\[1ex]
\setlength{\baselineskip}{1.5em} Improvements in data acquisition
and processing techniques have lead to an almost continuous flow of information for financial
data. High resolution tick data are available and can be quite conveniently described by a continuous time process. It is therefore natural to ask for
possible extensions of financial time series models to a functional setup.
In this paper we propose a functional version of the popular
ARCH model.
We will establish conditions for the existence of
a strictly stationary solution, derive weak dependence and moment conditions,
show consistency of the estimators
and perform a small empirical study demonstrating how our model matches with
real data.
\\

\noindent {\bf Keywords:} ARCH, financial data, functional time series, high-frequency data, weak-dependence.
\end{small}

\setlength{\parskip}{1.5ex plus0.5ex minus 0.5ex}
\setlength{\baselineskip}{1.4em}

\noindent {\bf MSC 2000:} 60F05

\section{Introduction}\label{s:intro}

To date not many functional time series models exist to describe sequences of dependent observations.  Arguably the most popular
is the ARH(1), the autoregressive Hilbertian process of order $1$.
It is a natural extension of the scalar and vector valued
AR(1) process (cf.\ Brockwell and Davis~\cite{brockwell:davis:1991}).
Due to the fact that the ARH(1) model is mathematically and statistically quite
flexible and well established, it is used in practice for
modeling and prediction of continuous-time random experiments. We refer to
Bosq~\cite{bosq:2000} for a detailed treatment of moving averages, autoregressive
and general linear time series sequences.
Despite the prominent
presence in time series analysis it is clear that the applicability of
moving average and autoregressive processes is limited. To describe nonlinear models in the scalar and vector cases,  a
number of different approaches  have been introduced in the last decades. One of the most popular ones in econometrics is  the ARCH model of Engle~\cite{engle:1982}
and the more general GARCH model of Bollerslev~\cite{bollerslev:1990} which
have had an enormous impact on the modeling of financial
data. For surveys on volatility models we refer to Silvennoinen and Ter\"asvirta ~\cite{silv:2009}.
GARCH-type models are designed for the analysis of daily, weekly
or more general long-term period returns. Improvements
in data acquisition and processing techniques have lead to an almost
continuous flow of information for financial data
with online investment decisions.
High resolution tick data are available and can be quite conveniently described as functions.
It is therefore natural to ask for possible extensions of
these financial time series models to a functional setup. The idea is that  instead of
a scalar return sequence $\{y_k,\, 1\leq k\leq T\}$ we have a functional time series
$\{y_k(t),\, 1\leq k\leq T,\, 0\leq t\leq S\}$, where $y_k(t)$ are intraday (log-)returns on day $k$ at time $t$. In other words if $\{P_k(t),\, 1\leq k\leq T,\, 0\leq t\leq S\}$ is the underlying price process, then $y_k(t)=\log P_k(t)-\log P_k(t-h)$ for the
desired time lag $h$, where we will typically set $h= 5 \mathrm{min}$.
By rescaling we can always assume that $S=1$ and then the interval $[0,1]$ represents one trading day.

We notice that a daily
segmentation of the data is natural and preferable to only one continuous
time process $\{y(s),\, 0\leq s\leq T\}$, say, for all $T$ days of our
sample (cf.\ Harrison et al.~\cite{harrison:1984}, Barndorff-Nielsen and Shephard~\cite{barndorff:2004}, Zhang et al.~\cite{Zhang:2005}, Barndorff-Nielsen et al.~\cite{barndorff:2008}, and Jacod et al.~\cite{jacod:2009}). Due to the time laps between trading days (implying e.g.\ that
opening and  closing prices do not necessarily coincide) one continuous time
model might not be suitable for a longer period. Intraday volatilities of the euro-dollar rates investigated by Cyree et al.~\cite{cyree:2005} empirically can be considered as daily curves. Similarly, Gau~\cite{gau:2005}  studied the shape of the intraday volatility curves of the Taipei FX market. Angelidis and Degiannakis~\cite{angelidis:2008}  compared predictions based on intra-day and inter-day data. Elezovi\'c~\cite{elezovic:2009} modeled bid and ask prices as continuous functions. The spot exchange rates in Fatum and Pedersen~\cite{fatum:2009} can be considered as functional observations as well. Evans and Speight~\cite{evans:2010} uses 5-min returns for Euro-Dollar, Euro-Sterling and Euro-Yen exchange rates.

In this paper we propose a functional ARCH model. 
Usually time series
are defined  by stochastic recurrence equations establishing the
relationship between past and future observations.
The question preceding any further analysis is whether such an equation has
a (stationary) solution. For the scalar ARCH necessary and sufficient  conditions have been derived by Nelson~\cite{nelson:1990}. Interestingly, these results cannot be transferred directly to multivariate extensions (cf.\ Silvennoinen and Ter\"asvirta ~\cite{silv:2009}).
Due to the complicated dynamics of
multivariate ARCH/GARCH type models (MGARCH), finding the necessary and sufficient
conditions for the existence of stationary solutions to the defining equations is a difficult problem. Also the characterization of the existence of the moments in GARCH$(p,q)$ equations is given by very involved formulas (cf.\ Ling, S. and  McAleer~\cite{ling:2002}).
It is therefore not
surprising that in a functional setup, i.e.\ when dealing with
intrinsically infinite dimensional objects,
some  balancing between generality and mathematical
feasibility of the model is required.

In Section~\ref{s:farch} we propose
a model for which we provide conditions for the existence of a unique stationary solution.  These conditions are not too far from being optimal. We will also
study the dependence structure of the model, which is useful in many
applications, e.g.\ in estimation which will be treated in Section~\ref{s:estimation}.
 We also provide an example illustrating that the proposed functional ARCH
 model is able to capture typical characteristics
of high
frequency returns, see Section~\ref{s:empirical}.

In this paper we  use  the following notation.
Let  $F$ denote  a generic function space. Throughout
this consists of
real valued functions with domain $[0,1]$.
In many applications $F$ will be equal to $\mathcal{H}$=$L^2([0,1]),$
 the Hilbert space of square integrable functions with norm $\|x\|_\mathcal{H}=\big(\int_0^1x^2(s)ds\big)^{1/2}$
which is generated by the inner product $\langle x,y\rangle=\int_0^1x(s)y(s)ds$ for $x,y\in\mathcal{H}$. Another important example
is $F=\mathcal{C}[0,1]$. This is the space of continuous functions
on $[0,1]$ equipped with the sup-norm $\|x\|_\infty=\sup_{t\in[0,1]}|x(t)|.$
By $F^+$ we denote the set of non-negative functions in $F$.
To further lighten notation we
shall often write $x$ when we mean $\{x(t),\,t\in[0,1]\}$,
or $\beta$ for
integral kernels $\{\beta(t,s),\,0\leq t\leq 1,\, 0\leq s\leq 1\}$
as well as for the corresponding operators.
If $x,y\in F$ then $xy$ stands for pointwise multiplications, i.e.  $xy=\{x(s)y(s),\,s\in[0,1]\}$.
Since integrals will always be taken over the unit interval we shall henceforth
simply write $\int x(t)dt$.
A random function $X$ with values in $\mathcal{H}$ is said to be in $L_\mathcal{H}^p$ if
$\nu_p(X)=\big(E\|X\|_\mathcal{H}^p\big)^{1/p}<\infty$. 


\section{The functional ARCH model}\label{s:farch}

We start with the following general definition.
\begin{Definition}\label{d:S-ARCH}
Let
$\{\varepsilon_k\}$ be a sequence of independent and identically
distributed random functions in $F$.  Further let
$\beta:F^+\to F^+$ be a non-negative operator and let $\delta\in F^+$.
Then an $F$-valued process $\{y_k(s),\,k\in\mathbb{Z},\,s\in[0,1]\}$
is called a functional ARCH(1) process in $F$ if the following holds:
\begin{equation}\label{e:y}
y_k=\varepsilon_k\sigma_k
\end{equation}
and
\begin{equation}\label{e:sigma}
\sigma_k^2=\delta+\beta(y_{k-1}^2).
\end{equation}
\end{Definition}
The assumption for the existence of processes satisfying (\ref{e:y}) and
(\ref{e:sigma}) depends on the choice of $F$.
So next we  specify $F$ and put some restrictions
on the operator $\beta$.
Our first result gives a sufficient condition for the existence of a strictly stationary
solution when $F=\mathcal{H}$.
We will assume that $\beta$ is
a (bounded) kernel operator defined by
\begin{equation}\label{e:betakernel}
\beta(x)(t)=\int\beta(t,s)x(s)ds,\quad x\in \mathcal{H}.
\end{equation}
Boundedness is e.g.\ guaranteed by finiteness of the Hilbert-Schmidt norm:
\begin{equation}\label{e:bounded_beta}
\|\beta\|_\mathcal{S}^2=\int\int\beta^2(t,s)dsdt<\infty.
\end{equation}
\begin{Theorem}\label{th:Hexist}
Let $\{y_k\}$ be the process given in Definition~\ref{d:S-ARCH}  with $F=\mathcal{H}$ and
$\beta$ given in \eqref{e:betakernel}, such that the operator $\beta$ is bounded.
Define $K(\varepsilon_1^2)=\big(\int\int\beta^2(t,s)\varepsilon_1^4(s)dsdt\big)^{1/2}$.
If there is some $\alpha>0$ such that
$E\big\{K(\varepsilon_1^2)\big\}^\alpha<1$, then \eqref{e:y} and \eqref{e:sigma} have
a unique strictly stationary solution in $\mathcal{H}$. Furthermore, $\sigma_k^2$ is of the form
\begin{equation}\label{e:bernoullishift}
\sigma_k^2=g(\varepsilon_{k-1},\varepsilon_{k-2},\ldots),
\end{equation}
with some measurable functional $g:\mathcal{H}^{\mathbb{N}}\to \mathcal{H}$.
\end{Theorem}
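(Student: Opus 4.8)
The plan is to recast \eqref{e:y}--\eqref{e:sigma} as a stochastic recurrence equation for $\sigma_k^2$ and to display its solution as a series that converges in $\mathcal{H}$. Substituting $y_{k-1}=\varepsilon_{k-1}\sigma_{k-1}$ into \eqref{e:sigma} gives $\sigma_k^2=\delta+\beta(\varepsilon_{k-1}^2\sigma_{k-1}^2)$. Introduce the random linear operator $\Psi_k$ on $\mathcal{H}$ by $\Psi_k(x)(t)=\int\beta(t,s)\varepsilon_{k-1}^2(s)x(s)\,ds$, so the recursion becomes $\sigma_k^2=\delta+\Psi_k(\sigma_{k-1}^2)$. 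The operator $\Psi_k$ is a kernel operator with kernel $\beta(t,s)\varepsilon_{k-1}^2(s)$; hence its Hilbert--Schmidt norm equals $K(\varepsilon_{k-1}^2)$ and, a fortiori, $\|\Psi_k\|_{\mathcal{L}}\le K(\varepsilon_{k-1}^2)$, which is a.s.\ finite under the hypothesis. Iterating the recursion backwards motivates the candidate
\[
\sigma_k^2=\delta+\sum_{n\ge1}\Psi_k\Psi_{k-1}\cdots\Psi_{k-n+1}(\delta).
\]

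First I would establish convergence. By submultiplicativity of the operator norm,
\[
\big\|\Psi_k\Psi_{k-1}\cdots\Psi_{k-n+1}(\delta)\big\|_{\mathcal{H}}\le\|\delta\|_{\mathcal{H}}\prod_{i=k-n}^{k-1}K(\varepsilon_i^2).
\]
We may assume $0<\alpha\le1$: if the hypothesis holds for some $\alpha>1$ then Jensen's inequality yields $\ev\{K(\varepsilon_1^2)\}<1$, so it holds for $\alpha=1$ and, again by Jensen, for every smaller exponent. Using subadditivity of $t\mapsto t^{\alpha}$ on $[0,\infty)$ together with the independence of the $\varepsilon_i$,
\[
\ev\Big(\sum_{n\ge1}\big\|\Psi_k\Psi_{k-1}\cdots\Psi_{k-n+1}(\delta)\big\|_{\mathcal{H}}\Big)^{\alpha}\le\|\delta\|_{\mathcal{H}}^{\alpha}\sum_{n\ge1}\big(\ev\{K(\varepsilon_1^2)\}^{\alpha}\big)^{n}<\infty,
\]
the geometric ratio being strictly below $1$. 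Hence the series converges absolutely in the Banach space $\mathcal{H}$, almost surely. Since $\beta$ maps $\mathcal{H}^+$ into $\mathcal{H}^+$ its kernel is a.e.\ non-negative, and multiplication by $\varepsilon_i^2$ preserves non-negativity, so each summand lies in $\mathcal{H}^+$ and $\sigma_k^2\in\mathcal{H}^+$; we then set $\sigma_k=(\sigma_k^2)^{1/2}$ and $y_k=\varepsilon_k\sigma_k$. Because $\Psi_k$ is bounded, hence continuous, it may be applied term by term to the series, and a reindexing gives $\delta+\Psi_k(\sigma_{k-1}^2)=\sigma_k^2$; since $\Psi_k(\sigma_{k-1}^2)=\beta(\varepsilon_{k-1}^2\sigma_{k-1}^2)=\beta(y_{k-1}^2)$, equations \eqref{e:y} and \eqref{e:sigma} hold.

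Next I would read off \eqref{e:bernoullishift} and stationarity, then prove uniqueness. The $n$-th partial sum is a fixed measurable function of $(\varepsilon_{k-1},\dots,\varepsilon_{k-n})$, built from $\beta$ composed with the multiplications by $\varepsilon_{k-1}^2,\dots,\varepsilon_{k-n}^2$; letting $n\to\infty$ on the full-measure set of convergence gives $\sigma_k^2=g(\varepsilon_{k-1},\varepsilon_{k-2},\dots)$ for one measurable $g:\mathcal{H}^{\mathbb{N}}\to\mathcal{H}$ not depending on $k$, which is \eqref{e:bernoullishift}. As $\{\varepsilon_k\}$ is i.i.d., the $\mathcal{H}^{\mathbb{N}}$-valued sequence $(\varepsilon_{k-1},\varepsilon_{k-2},\dots)$, $k\in\mathbb{Z}$, is strictly stationary, and therefore so is $\{(\sigma_k^2,y_k)\}$. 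For uniqueness, let $\{\tilde y_k\}$ be another strictly stationary solution with volatility $\tilde\sigma_k^2$; iterating the recursion $N$ times,
\[
\tilde\sigma_k^2=\sum_{n=0}^{N-1}\Psi_k\Psi_{k-1}\cdots\Psi_{k-n+1}(\delta)+\Psi_k\Psi_{k-1}\cdots\Psi_{k-N+1}(\tilde\sigma_{k-N}^2),
\]
where the $n=0$ term is $\delta$. The sum converges a.s.\ to $\sigma_k^2$, while the remainder is at most $\big(\prod_{i=k-N}^{k-1}K(\varepsilon_i^2)\big)\,\|\tilde\sigma_{k-N}^2\|_{\mathcal{H}}$ in norm. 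Since $\ev\big(\prod_{i=k-N}^{k-1}K(\varepsilon_i^2)\big)^{\alpha}=\big(\ev\{K(\varepsilon_1^2)\}^{\alpha}\big)^{N}$ is summable in $N$, Markov's inequality and the Borel--Cantelli lemma force $\prod_{i=k-N}^{k-1}K(\varepsilon_i^2)\to0$ a.s.; and $\|\tilde\sigma_{k-N}^2\|_{\mathcal{H}}$ is tight by stationarity, so the remainder tends to $0$ in probability. Hence, letting $N\to\infty$, $\tilde\sigma_k^2=\sigma_k^2$ a.s., and therefore $\tilde y_k=y_k$ a.s.

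The main obstacle is that pointwise multiplication by $\varepsilon_{k-1}^2$ need not be a bounded operator on $L^2$, so it cannot be handled in isolation; keeping the kernel operator $\beta$ attached to it is what makes every finite composition $\Psi_k\cdots\Psi_{k-n+1}$ Hilbert--Schmidt, with norm dominated by the \emph{scalar} product $\prod_i K(\varepsilon_i^2)$ --- which is exactly why the one-dimensional moment condition $\ev\{K(\varepsilon_1^2)\}^{\alpha}<1$ suffices to close the argument. Minor additional care is needed for the measurability of $g$ and for checking that $y_k=\varepsilon_k\sigma_k$ is a well-defined $\mathcal{H}$-valued random element.
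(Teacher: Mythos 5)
Your proof is correct, but it takes a different route from the paper. The paper does not construct the solution explicitly: it casts $\sigma_k^2=\delta+\beta(\varepsilon_{k-1}^2\sigma_{k-1}^2)$ as an iterated random function system $X_n=M_{\theta_n}(X_{n-1})$ with $M_{\theta_n}(x)(t)=\delta(t)+\int\beta(t,s)\varepsilon_{n-1}^2(s)x(s)\,ds$, and verifies the two hypotheses of a cited theorem of Wu and Shao (2004): moment condition (A) with $y_0=0$, and the geometric contraction condition (B) via exactly the same Cauchy--Schwarz estimate you use, namely $\|M_{\theta}(x)-M_{\theta}(x_0)\|_{\mathcal{H}}\le K(\varepsilon^2)\|x-x_0\|_{\mathcal{H}}$; existence, uniqueness, strict stationarity and the Bernoulli-shift form \eqref{e:bernoullishift} then come out of that black box. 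You instead unpack the black box: you write the backward iteration as the Neumann-type series $\sigma_k^2=\delta+\sum_{n\ge1}\Psi_k\cdots\Psi_{k-n+1}(\delta)$, prove its a.s.\ absolute convergence by the reduction to $\alpha\le1$ plus subadditivity and independence, and establish uniqueness among strictly stationary solutions by a pathwise remainder bound, Markov plus Borel--Cantelli, and tightness. Both arguments hinge on the identical key observation that the random affine map is Lipschitz with the scalar random constant $K(\varepsilon^2)$ (the Hilbert--Schmidt norm of the kernel $\beta(t,s)\varepsilon^2(s)$), so the one-dimensional condition $E\{K(\varepsilon_1^2)\}^{\alpha}<1$ closes the argument. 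What the paper's route buys is economy and an immediate payoff for Theorem~\ref{th:Hweak}: the last assertion of the Wu--Shao theorem directly delivers the geometric coupling bound \eqref{e:Hweak}, which your construction would need a short extra argument to recover. What your route buys is self-containedness and an explicit series representation of $g$, which makes stationarity, non-negativity of $\sigma_k^2$, and the independence structure transparent; your explicit uniqueness proof is also cleaner in that it visibly covers every strictly stationary solution via a pathwise bound. The loose ends you flag (measurability of $g$, whether $y_k=\varepsilon_k\sigma_k$ is an $\mathcal{H}$-valued element without further moment assumptions on $\varepsilon$) are not handled in the paper's proof of this theorem either --- the latter is deferred to Proposition~\ref{p:momentsH} under the extra condition $E\{\|\varepsilon_0\|_\infty\}^{\alpha}<\infty$ --- so they do not constitute a gap relative to the paper.
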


It follows that $\{\sigma_k\}$ and $\{y_k\}$ are not just strictly stationary
but also ergodic (cf.\ Stout~\cite{stout:1974}).
Let $\mathcal{F}_k$ be the $\sigma$-algebra generated by the sequence
$\{\varepsilon_i,\, i\leq k\}$. If \eqref{e:y} and \eqref{e:sigma}
have a stationary solution and if we assume that $E\varepsilon_k(t)=0$,
$E\varepsilon^2(t)<\infty$ and $E\sigma_k^2(t)<\infty$ for all $t\in[0,1]$,
then due to \eqref{e:bernoullishift} it is easy to see that
$$
\mathrm{corr}\big\{y_k(t),y_{k}(s)|\mathcal{F}_{k-1}\big\}=
\mathrm{corr}\big\{\varepsilon_k(t),\varepsilon_{k}(s)\big\}.
$$
Since by our assumption $\{\varepsilon_k,\, k\in\mathbb{Z}\}$
is stationary, the conditional correlation is independent of $k$
and can be fully described by the covariance kernel
$C_\varepsilon(t,s)=\mathrm{Cov}(\varepsilon(t),\varepsilon(s))$.
However, we have
$
\mathrm{Cov}\big\{y_k(t),y_{k}(s)|\mathcal{F}_{k-1}\big\}=\sigma_k(t)\sigma_k(s) C_{\varepsilon}(s,t).
$
This is in accordance with the {\em constant conditional correlation} (CCC)
multivariate GARCH models of Bollerslev~\cite{bollerslev:1990} and
Jeantheau~\cite{jeantheau:1998}.

Our next result shows that $\sigma_k^2$ of \eqref{e:bernoullishift}
can be geometrically approximated with $m$-dependent variables, which
establishes weak dependence of the processes \eqref{e:y} and
\eqref{e:sigma}.

\begin{Theorem}\label{th:Hweak}
Assume that the conditions of Theorem~\ref{th:Hexist} hold.
Let $\{\varepsilon_k'\}$ be an independent copy of
$\{\varepsilon_k\}$ and define
$
\sigma_{km}^2=g(\varepsilon_{k-1},\varepsilon_{k-2},\ldots,\varepsilon_{k-m},\varepsilon_{k-m-1}',
\varepsilon_{k-m-2}',\ldots).
$
Then
\begin{equation}\label{e:Hweak}
E\{\|\sigma_k^2-\sigma^2_{km}\|_\mathcal{H}\}^\alpha \leq cr^m,
\end{equation}
with some $0<r=r(\alpha)<1$ and $c=c(\alpha)<\infty$.
\end{Theorem}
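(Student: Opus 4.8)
The plan is to run everything off the explicit series expansion of $\sigma_k^2$ obtained by unrolling \eqref{e:sigma}, which is also the expansion that produces the functional $g$ in \eqref{e:bernoullishift}. Write $\Lambda_i$ for the random bounded operator $x\mapsto\beta(\varepsilon_i^2 x)$ on $\mathcal{H}$ (form the pointwise product $\varepsilon_i^2 x$, then apply $\beta$), and $\Lambda_i'$ for the same object built from $\varepsilon_i'$ instead of $\varepsilon_i$. Iterating \eqref{e:sigma} gives
\[
\sigma_k^2=\sum_{j\ge 0}\bigl(\Lambda_{k-1}\circ\cdots\circ\Lambda_{k-j}\bigr)(\delta),
\]
with the $j=0$ term read as $\delta$ and the series converging absolutely in $\mathcal{H}$ almost surely; the map $g$ in \eqref{e:bernoullishift} is precisely this a.s.-defined limit, as the proof of Theorem~\ref{th:Hexist} shows, so substituting the spliced sequence into $g$ yields
\[
\sigma_{km}^2=\sum_{j\ge 0}\bigl(\Lambda_{k-1}\circ\cdots\circ\Lambda_{k-m}\circ\Lambda'_{k-m-1}\circ\cdots\circ\Lambda'_{k-j}\bigr)(\delta).
\]
The one estimate used throughout is $\|\Lambda_i(x)\|_\mathcal{H}\le K(\varepsilon_i^2)\,\|x\|_\mathcal{H}$: apply Cauchy--Schwarz to $\int\beta(t,s)\varepsilon_i^2(s)x(s)\,ds$, splitting $\beta(t,s)\varepsilon_i^2(s)$ from $x(s)$, and integrate the square over $t$; this gives $\bigl\|(\Lambda_{k-1}\circ\cdots\circ\Lambda_{k-j})(\delta)\bigr\|_\mathcal{H}\le\|\delta\|_\mathcal{H}\prod_{i=1}^{j}K(\varepsilon_{k-i}^2)$, and the analogous bound with $K(\varepsilon'_{k-i}^2)$ factors for any mixed product.

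Next I would subtract the two series. By construction the summands with $0\le j\le m$ involve only the operators $\Lambda_{k-1},\dots,\Lambda_{k-m}$, hence are identical in both expansions and cancel. For $j>m$ the triangle inequality gives
\[
\|\sigma_k^2-\sigma_{km}^2\|_\mathcal{H}\le\|\delta\|_\mathcal{H}\sum_{j>m}\Bigl(\prod_{i=1}^{j}K(\varepsilon_{k-i}^2)+\prod_{i=1}^{m}K(\varepsilon_{k-i}^2)\prod_{i=m+1}^{j}K(\varepsilon'_{k-i}^2)\Bigr).
\]

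It remains to take the $\alpha$-th moment. Put $q:=E\{K(\varepsilon_1^2)\}^\alpha$, which is $<1$ by hypothesis. Since $\{\varepsilon_i\}$ and $\{\varepsilon_i'\}$ are i.i.d.\ and mutually independent, each of the two products above factors into $j$ independent identically distributed factors, so each has $\alpha$-th moment exactly $q^{j}$. For $0<\alpha\le 1$ I would use the subadditivity of $t\mapsto t^\alpha$ to push the power inside both sums, obtaining $E\{\|\sigma_k^2-\sigma_{km}^2\|_\mathcal{H}\}^\alpha\le 2\|\delta\|_\mathcal{H}^\alpha\sum_{j>m}q^{j}$, which is $\le c\,q^{m}$ with $c=2q\|\delta\|_\mathcal{H}^\alpha/(1-q)$. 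For $\alpha>1$ I would instead apply Minkowski's inequality in $L^\alpha$ termwise, obtaining $\bigl(E\{\|\sigma_k^2-\sigma_{km}^2\|_\mathcal{H}\}^\alpha\bigr)^{1/\alpha}\le 2\|\delta\|_\mathcal{H}\sum_{j>m}q^{j/\alpha}$, and then raise to the power $\alpha$. In either regime \eqref{e:Hweak} follows with $r=q=E\{K(\varepsilon_1^2)\}^\alpha$ and a finite $c=c(\alpha)$.

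I expect real care to be needed in only two places. The first is legitimizing the series form of $g$ together with its term-by-term splicing: this is not supplied by the \emph{statement} of Theorem~\ref{th:Hexist} but is exactly what its proof delivers — a.s.\ absolute convergence in $\mathcal{H}$ of the iterated operators applied to $\delta$ — and it is that absolute convergence which justifies the rearrangement making the low-order terms cancel. The second is the dichotomy $\alpha\le 1$ versus $\alpha>1$, which is where the constants are most easily mishandled and where one verifies that the single rate $r=E\{K(\varepsilon_1^2)\}^\alpha$ works in both cases. The remaining ingredients — Cauchy--Schwarz, Fubini to make the iterated kernel operators meaningful, the independence factorization, and summing a geometric series — are routine.
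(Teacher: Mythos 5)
Your argument is correct, but it does not follow the paper's route. The paper disposes of this theorem in one line: it is stated to be ``a simple corollary'' of Theorem~\ref{th:Hexist} together with the final assertion of the Wu--Shao result (Theorem~\ref{th:wu}), namely that $E\{\rho(S_n(X_0^*),S_n(X_0))\}^\alpha\leq c_2r_2^n$ when the forward iteration is started from the stationary state and from an independent copy of it; with $S_m$ the $m$-step ARCH iteration this is exactly \eqref{e:Hweak}. You instead exploit the affine structure of the volatility recursion to write $\sigma_k^2$ as the a.s.\ convergent Volterra-type series $\sum_{j\geq 0}(\Lambda_{k-1}\circ\cdots\circ\Lambda_{k-j})(\delta)$, observe that the terms with $j\leq m$ cancel against the spliced series for $\sigma_{km}^2$, bound each remaining term by $\|\delta\|_\mathcal{H}\prod K(\cdot)$ via the same Cauchy--Schwarz estimate the paper uses to verify condition (B) of Theorem~\ref{th:wu}, and then sum a geometric series, treating $0<\alpha\leq 1$ by subadditivity of $t\mapsto t^\alpha$ and $\alpha>1$ by Minkowski in $L^\alpha$. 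The contraction mechanism (the random Lipschitz constant $K(\varepsilon^2)$ with $E\{K(\varepsilon_1^2)\}^\alpha<1$ and independence across days) is the same in both arguments, but yours is self-contained and yields the explicit rate $r=E\{K(\varepsilon_1^2)\}^\alpha$ and explicit constants, at the price of relying on the linearity of \eqref{e:sigma} (the series expansion is not available for general Lipschitz random maps), whereas the paper's citation of the general iterated-random-functions theorem buys brevity and covers nonlinear recursions with no extra work. The two points you flag as delicate --- identifying $g$ with the a.s.\ limit of the backward iterates so that splicing is legitimate, and the $\alpha\leq 1$ versus $\alpha>1$ dichotomy --- are indeed the only places needing care, and you handle both correctly.
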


To better understand the idea behind our result we remark the following. Assume that we redefine
$$
\sigma_{km}^2=g(\varepsilon_{k-1},\varepsilon_{k-2},\ldots,\varepsilon_{k-m},\varepsilon_{k-m-1,k-m}^{(k)},\varepsilon_{k-m-2,k-m}^{(k)},\ldots),
$$
where $\{\varepsilon_{\ell,i}^{(k)},\ell,i,k \in \mathbb{Z}\}$ are independent copies of $\{\varepsilon_\ell,\ell\in\mathbb{Z}\}$.
In other words,
every $\sigma_k^2$  gets its "individual" copy of $\{\varepsilon_{\ell,i}^{(k)}\}$
to define the approximations.
It can be easily seen that then for any fixed $m\geq 1$, $\{\sigma_{km}^2,\,k\in\mathbb{Z}\}$
form $m$-dependent sequences, while the value on the left hand side in inequality \eqref{e:Hweak}
doesn't change. As we have shown in our recent papers
\cite{aue:hormann:horvath:huskova:steinbach:2009+}
and \cite{hoermann:kokoszka:2010}, approximations like (\ref{e:Hweak}) are particularly useful in
studying large sample properties of functional data. We use (\ref{e:Hweak}) to provide conditions for the existence of moments of the stationary solutions of \eqref{e:y} and \eqref{e:sigma}. It also follows immediately from (\ref{e:Hweak}), that if \eqref{e:y} and \eqref{e:sigma} are solved starting with some initial values $y_0^*$ and $\sigma_0^*$, then the
effect of the initial values dies out exponentially fast.

In a finite dimensional vector space all norms are equivalent. This
is no longer true in the functional (infinite dimensional) setup and
whether a solution of \eqref{e:y} and \eqref{e:sigma} exists
depends on the choice of space and norm of the state space. Depending
on the application, it might be more convenient to work in a different space.
We give here the analogue of Theorems~\ref{th:Hexist} and \ref{th:Hweak} for
a functional ARCH process in $\mathcal{C}[0,1]$.

\begin{Theorem}\label{th:Cexist}
Let $\{y_k\}$ be the process given in Definition~\ref{d:S-ARCH} with $F=\mathcal{C}[0,1]$ and
define $H(\varepsilon_1^2)=\sup_{0\leq t\leq 1}\int\beta(t,s)\varepsilon_1^2(s)ds$.
If there is some $\alpha>0$ such that
$E\big\{H(\varepsilon_1^2)\big\}^\alpha<1$, then \eqref{e:y} and \eqref{e:sigma} have
a unique strictly stationary solution in $\mathcal{C}[0,1]$. Furthermore, $\sigma_k^2$ can be
represented as in \eqref{e:bernoullishift}.
In addition the proposition of Theorem~\ref{th:Hweak} holds, with
\eqref{e:Hweak} replaced by
$$
E\big\{\|\sigma_k^2-\sigma_{km}^2\|_\infty\big\}^\alpha\leq cr^{m}.
$$
\end{Theorem}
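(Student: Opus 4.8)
The plan is to mimic the proofs of Theorems~\ref{th:Hexist} and \ref{th:Hweak}, replacing the Hilbert--Schmidt estimate by a sup-norm estimate that is available because $\beta$ is a non-negative kernel operator. For each $k\in\mathbb{Z}$ I would introduce the random, positive, linear operator $A_k$ on $\mathcal{C}^+[0,1]$ defined by $A_k(x)(t)=\int\beta(t,s)\varepsilon_k^2(s)x(s)\,ds$; under the standing assumptions on $\beta$ it maps $\mathcal{C}^+[0,1]$ into itself, and the decisive inequality is that, since $\beta\ge0$ and $\varepsilon_k^2\ge0$,
$$
\|A_k(x)\|_\infty\le H(\varepsilon_k^2)\,\|x\|_\infty ,\qquad x\in\mathcal{C}^+[0,1].
$$
With this notation \eqref{e:sigma} reads $\sigma_k^2=\delta+A_{k-1}(\sigma_{k-1}^2)$, and iterating it $n$ times gives $\sigma_k^2=\delta+\sum_{j=1}^{n}(A_{k-1}\circ\cdots\circ A_{k-j})(\delta)+(A_{k-1}\circ\cdots\circ A_{k-n-1})(\sigma_{k-n-1}^2)$, which suggests the candidate
$$
\sigma_k^2:=\delta+\sum_{n\ge1}(A_{k-1}\circ A_{k-2}\circ\cdots\circ A_{k-n})(\delta).
$$

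First I would show that this series converges in $\mathcal{C}[0,1]$, almost surely and in the $\alpha$-th mean of the sup-norm. Its $n$-th term is a continuous function with $\|(A_{k-1}\circ\cdots\circ A_{k-n})(\delta)\|_\infty\le\|\delta\|_\infty\prod_{j=1}^{n}H(\varepsilon_{k-j}^2)$, a product of $n$ independent copies of $H(\varepsilon_1^2)$. If $\alpha\ge1$, Minkowski's inequality and independence give $\big(E\{\sum_n\|(A_{k-1}\circ\cdots\circ A_{k-n})(\delta)\|_\infty\}^{\alpha}\big)^{1/\alpha}\le\|\delta\|_\infty\sum_{n\ge1}\big(E H(\varepsilon_1^2)^{\alpha}\big)^{n/\alpha}<\infty$ because $E H(\varepsilon_1^2)^{\alpha}<1$; if $0<\alpha<1$, subadditivity of $t\mapsto t^{\alpha}$ gives $E\{\sum_n\|(A_{k-1}\circ\cdots\circ A_{k-n})(\delta)\|_\infty\}^{\alpha}\le\|\delta\|_\infty^{\alpha}\sum_{n\ge1}\big(E H(\varepsilon_1^2)^{\alpha}\big)^{n}<\infty$. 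In either case the partial sums form an almost surely uniformly Cauchy sequence of continuous functions, so their limit $\sigma_k^2$ lies in $\mathcal{C}[0,1]$, has $E\|\sigma_k^2\|_\infty^{\alpha}<\infty$, and is by construction a measurable functional $g(\varepsilon_{k-1},\varepsilon_{k-2},\dots)$, which gives \eqref{e:bernoullishift}. Since $A_{k-1}$ is bounded and linear one may apply it term by term to the series defining $\sigma_{k-1}^2$ to verify $\sigma_k^2=\delta+A_{k-1}(\sigma_{k-1}^2)$, so the candidate indeed solves \eqref{e:y}--\eqref{e:sigma}.

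For uniqueness, let $\tilde\sigma_k^2$ be any strictly stationary solution in $\mathcal{C}[0,1]$; iterating the difference gives $\|\sigma_k^2-\tilde\sigma_k^2\|_\infty\le\big(\prod_{j=1}^{n}H(\varepsilon_{k-j}^2)\big)\,\|\sigma_{k-n-1}^2-\tilde\sigma_{k-n-1}^2\|_\infty$. From $E H(\varepsilon_1^2)^{\alpha}<1$ and Jensen's inequality, $E\log H(\varepsilon_1^2)\le\alpha^{-1}\log E H(\varepsilon_1^2)^{\alpha}<0$ while $E\log^+H(\varepsilon_1^2)<\infty$, so the strong law of large numbers yields $\prod_{j=1}^{n}H(\varepsilon_{k-j}^2)\to0$ almost surely; since $\{\|\sigma_{k-n-1}^2-\tilde\sigma_{k-n-1}^2\|_\infty\}_{n\ge1}$ is stationary, hence tight, the right-hand side tends to $0$ in probability, forcing $\sigma_k^2=\tilde\sigma_k^2$ almost surely. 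For the weak-dependence estimate, $\sigma_{km}^2$ agrees with $\sigma_k^2$ except that the operators $A_{k-m-1},A_{k-m-2},\dots$ are replaced by the analogues $A_{k-m-1}',A_{k-m-2}',\dots$ built from the independent copy $\{\varepsilon_k'\}$, so by linearity
$$
\sigma_k^2-\sigma_{km}^2=(A_{k-1}\circ\cdots\circ A_{k-m})(R-R'),\qquad R=\sum_{l\ge1}(A_{k-m-1}\circ\cdots\circ A_{k-m-l})(\delta),
$$
with $R'$ the analogue of $R$; here $R$ and $R'$ are independent of $\varepsilon_{k-1},\dots,\varepsilon_{k-m}$, $R\isd R'$, and $E\|R\|_\infty^{\alpha}<\infty$ by the convergence step. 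Bounding $\|\sigma_k^2-\sigma_{km}^2\|_\infty\le\big(\prod_{j=1}^{m}H(\varepsilon_{k-j}^2)\big)\big(\|R\|_\infty+\|R'\|_\infty\big)$, raising to the power $\alpha$, and using that $\prod_{j=1}^{m}H(\varepsilon_{k-j}^2)$ is independent of $(\|R\|_\infty,\|R'\|_\infty)$, one obtains $E\{\|\sigma_k^2-\sigma_{km}^2\|_\infty\}^{\alpha}\le c\,r^{m}$ with $r=E H(\varepsilon_1^2)^{\alpha}\in(0,1)$ and $c<\infty$. The one genuinely delicate point is the uniqueness step: an arbitrary stationary solution is not assumed to possess any finite moment, so an $L^{\alpha}$ contraction is unavailable and must be replaced by the almost-sure decay of the operator product — obtained from the strong law of large numbers applied to $\log H(\varepsilon_1^2)$ — together with tightness; everything else is the $\mathcal{C}[0,1]$-transcription of the proofs of Theorems~\ref{th:Hexist} and \ref{th:Hweak}, the only bookkeeping being the separate treatment of $\alpha\ge1$ and $0<\alpha<1$.
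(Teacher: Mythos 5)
Your argument is correct, but it follows a more self-contained route than the paper. The paper handles Theorem~\ref{th:Cexist} exactly as Theorem~\ref{th:Hexist}: it casts \eqref{e:sigma} as an iterated random function system $X_n=M_{\theta_n}(X_{n-1})$ with $M_{\theta}(x)=\delta+\beta(\varepsilon^2x)$, verifies conditions {\em (A)} (take the zero function, giving $\|\delta\|_\infty<\infty$) and {\em (B)} (the contraction $\|S_n(x)-S_n(x_0)\|_\infty\leq H(\varepsilon_{n-1}^2)\|S_{n-1}(x)-S_{n-1}(x_0)\|_\infty$, iterated and combined with independence), and then cites Theorem~\ref{th:wu} of Wu and Shao, whose conclusions deliver at once the stationary solution, the representation \eqref{e:bernoullishift}, and the geometric bound $E\{\|\sigma_k^2-\sigma_{km}^2\|_\infty\}^\alpha\leq cr^m$ (this last via the final statement of Theorem~\ref{th:wu} with an independent copy of the innovations). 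You instead rebuild this machinery by hand: the explicit Volterra-type series $\sigma_k^2=\delta+\sum_{n\geq1}(A_{k-1}\circ\cdots\circ A_{k-n})(\delta)$, convergence in $L^\alpha$ of the sup-norm (with the correct case split $\alpha\geq1$ versus $0<\alpha<1$), uniqueness via the strong law applied to $\log H(\varepsilon_1^2)$, and the $m$-dependence bound by splitting the series at lag $m$ and exploiting independence of $\prod_{j=1}^m H(\varepsilon_{k-j}^2)$ from $(R,R')$; the key contraction inequality is the same one the paper would use to check {\em (B)}. What your version buys is an explicit formula for $g$, a transparent uniqueness argument valid for arbitrary stationary solutions without moment assumptions, and independence from the external reference; what the paper's version buys is brevity, since Wu and Shao's theorem packages existence, the Bernoulli-shift representation, and the coupling bound in one stroke. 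Two small points to tidy: the bound $\|A_k(x)\|_\infty\leq H(\varepsilon_k^2)\|x\|_\infty$ should be noted to hold for all $x\in\mathcal{C}[0,1]$ (take absolute values inside the integral, using $\beta\geq0$, $\varepsilon_k^2\geq0$), since in the uniqueness and approximation steps you apply it to signed differences; and in the uniqueness step the sequence $\|\sigma_{k-n-1}^2-\tilde\sigma_{k-n-1}^2\|_\infty$ need not be stationary (joint stationarity of the two solutions is not given), but tightness, which is all you use, follows from the triangle inequality and the fixed marginal laws of each solution separately.
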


We continue with some immediate consequences of our theorems. We start with
conditions for the existence of the moments of the stationary solution of
  \eqref{e:y} and \eqref{e:sigma}.
\begin{Proposition}\label{p:momentsH}
Assume that the conditions of Theorem~\ref{th:Hexist} hold. Then
\begin{equation}\label{e:momH1}
E\big\{\|\sigma_0^2\|_\mathcal{H}\big\}^{\alpha}<\infty.
\end{equation}
If
\begin{equation}\label{e:momH2}
E\big\{\|\sigma_0\|_\mathcal{H}\big\}^{\alpha}<\infty
\end{equation}
and
\begin{equation}
E\big\{\|\varepsilon_0\|_\infty\big\}^{\alpha}<\infty,
\end{equation}
then
\begin{equation}\label{e:momH3}
E\big\{\|y_0\|_\mathcal{H}\big\}^{\alpha}<\infty.
\end{equation}
\end{Proposition}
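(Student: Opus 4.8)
The plan is to exploit the Bernoulli-shift representation \eqref{e:bernoullishift} together with the $m$-dependent approximation \eqref{e:Hweak} from Theorem~\ref{th:Hweak}. First I would prove \eqref{e:momH1}. The idea is to write $\sigma_0^2$ as a telescoping sum of its approximants: set $\sigma_{00}^2=g(\varepsilon_{-1}',\varepsilon_{-2}',\ldots)$ (all innovations replaced by the independent copy), so that $\sigma_0^2-\sigma_{00}^2=\sum_{m\ge 1}(\sigma_{0,m}^2-\sigma_{0,m-1}^2)$, where each summand has $\mathcal{H}$-norm $\alpha$-moment controlled by \eqref{e:Hweak} (applied to consecutive values of $m$), hence of order $r^{m}$. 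If $\alpha\ge 1$ one applies Minkowski's inequality in $L^\alpha_\mathcal{H}$ to the telescoping series and sums the geometric bound; if $0<\alpha<1$ one uses instead the elementary inequality $\|a+b\|^\alpha\le\|a\|^\alpha+\|b\|^\alpha$ to bound $E\|\sigma_0^2-\sigma_{00}^2\|^\alpha_\mathcal{H}$ by $\sum_{m\ge1}cr^m<\infty$. Since $\sigma_{00}^2\stackrel{\mathcal{L}}{=}\sigma_0^2$ and also $\sigma_{00}^2=\delta+\beta((\varepsilon_{-1}')^2\sigma_{-1}^2{}')$, one still has to check that $E\|\sigma_{00}^2\|_\mathcal{H}^\alpha<\infty$; but this is handled by the same fixed-point contraction used to build $g$ in Theorem~\ref{th:Hexist} — the iterates $\sigma_0^{2,(n)}$ obtained from $\delta$ by repeated application of the random map $x\mapsto\delta+\beta(\varepsilon_0^2 x)$ have uniformly bounded $\alpha$-moments because $E\{K(\varepsilon_1^2)\}^\alpha<1$, and $\sigma_0^2$ is their limit. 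Combining, $E\|\sigma_0^2\|_\mathcal{H}^\alpha\le E\|\sigma_0^2-\sigma_{00}^2\|_\mathcal{H}^\alpha+E\|\sigma_{00}^2\|_\mathcal{H}^\alpha<\infty$, possibly after the usual factor $2^{\alpha-1}$ (or $1$ when $\alpha<1$).

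Next, assuming \eqref{e:momH2} and $E\|\varepsilon_0\|_\infty^\alpha<\infty$, I would derive \eqref{e:momH3}. Here the key pointwise bound is $|y_0(t)|=|\varepsilon_0(t)|\,|\sigma_0(t)|\le\|\varepsilon_0\|_\infty\,|\sigma_0(t)|$ for every $t\in[0,1]$, so $\|y_0\|_\mathcal{H}=\big(\int y_0^2(t)\,dt\big)^{1/2}\le\|\varepsilon_0\|_\infty\,\|\sigma_0\|_\mathcal{H}$. Crucially, $\sigma_0^2=g(\varepsilon_{-1},\varepsilon_{-2},\ldots)$ depends only on the past innovations and hence $\sigma_0$ is independent of $\varepsilon_0$. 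Therefore
\[
E\|y_0\|_\mathcal{H}^\alpha\le E\big(\|\varepsilon_0\|_\infty^\alpha\,\|\sigma_0\|_\mathcal{H}^\alpha\big)=E\|\varepsilon_0\|_\infty^\alpha\cdot E\|\sigma_0\|_\mathcal{H}^\alpha<\infty,
\]
using \eqref{e:momH2} for the second factor.

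The only genuinely delicate point is establishing the uniform $\alpha$-moment bound on the fixed-point iterates that yields $E\|\sigma_{00}^2\|_\mathcal{H}^\alpha<\infty$ (equivalently $E\|\sigma_0^2\|_\mathcal{H}^\alpha<\infty$ directly). One must be careful because the natural recursion $\|\sigma_0^{2,(n)}\|_\mathcal{H}\le\|\delta\|_\mathcal{H}+\|\beta(\varepsilon_0^2\sigma_0^{2,(n-1)})\|_\mathcal{H}$ does not immediately close under the $\alpha$-moment when $\alpha<1$, since $K(\varepsilon_1^2)$ multiplies $\|\sigma^{2,(n-1)}\|$ in sup-norm rather than $\mathcal{H}$-norm; the standard remedy is to iterate the contraction and bound $\|\sigma_0^{2,(n)}\|_\mathcal{H}$ by $\sum_{j=0}^{n-1}\|\delta\|_\mathcal{H}\prod_{i=1}^{j}K(\varepsilon_{-i}^2)$ plus a vanishing remainder, then take $\alpha$-moments term by term using independence and submultiplicativity of $x^\alpha$, the geometric series converging precisely because $E\{K(\varepsilon_1^2)\}^\alpha<1$. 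This is exactly the estimate already carried out in the proof of Theorem~\ref{th:Hexist}, so in the write-up I would simply invoke it rather than repeat it. Everything else is routine application of Minkowski's / the $c_r$-inequality and the independence of $\varepsilon_0$ from the past.
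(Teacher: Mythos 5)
Your treatment of \eqref{e:momH3} is exactly the paper's: the pointwise bound $\|y_0\|_\mathcal{H}\leq\|\varepsilon_0\|_\infty\|\sigma_0\|_\mathcal{H}$ plus independence of $\varepsilon_0$ and $\sigma_0$ (the latter being $\mathcal{F}_{-1}$-measurable by \eqref{e:bernoullishift}), so that part needs no comment. For \eqref{e:momH1} you take a genuinely different, and partly redundant, route. The telescoping step is circular as a means of proving finiteness: since $\sigma_{00}^2=g(\varepsilon_{-1}',\varepsilon_{-2}',\ldots)\isd\sigma_0^2$, bounding $E\|\sigma_0^2-\sigma_{00}^2\|_\mathcal{H}^\alpha$ by a geometric series tells you nothing about $E\|\sigma_0^2\|_\mathcal{H}^\alpha$ until you separately prove $E\|\sigma_{00}^2\|_\mathcal{H}^\alpha<\infty$, which is the very statement at issue --- you notice this, and the entire weight of your proof then rests on the ``delicate point'': the bound $\|Z_n(0)\|_\mathcal{H}\leq\|\delta\|_\mathcal{H}\sum_{j}\prod_{i\le j}K(\varepsilon_{-i}^2)$, termwise $\alpha$-moments via the $c_r$-inequality (or Minkowski when $\alpha\geq1$) and independence, a geometric series convergent because $E\{K(\varepsilon_1^2)\}^\alpha<1$, and Fatou in the limit. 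That argument is correct, but it is not ``exactly the estimate already carried out in the proof of Theorem~\ref{th:Hexist}'': there only conditions (A) and (B) of the Wu--Shao theorem are verified, and no moment bound on the backward iterates or on $Z_\infty$ is derived, so in a final write-up you would have to carry out your sketch rather than cite it. The paper gets \eqref{e:momH1} in one line from the stated conclusion of Theorem~\ref{th:wu}: $E\|Z_1(0)-Z_\infty\|_\mathcal{H}^\alpha\leq c_1r<\infty$ together with $Z_1(0)=\delta$ and the triangle/$c_r$ inequality, so $E\|\sigma_0^2\|_\mathcal{H}^\alpha\leq 2^\alpha\{(\int\delta^2)^{\alpha/2}+c_1r\}$. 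Your approach buys a self-contained, hands-on derivation that reproduces what is inside the Wu--Shao machinery; the paper's buys brevity by using the already-quoted conclusion. I would drop the telescoping paragraph entirely and either invoke Theorem~\ref{th:wu} as the paper does or present your iterate bound as the proof proper.
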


\begin{Proposition}\label{p:momentsC}
Assume that the conditions of Theorem~\ref{th:Cexist} hold.  Then the analogue of Proposition~\ref{p:momentsH} holds, with
$\|\cdot\|_\mathcal{H}$ in \eqref{e:momH1}--\eqref{e:momH3} replaced
by $\|\cdot\|_\infty$.
\end{Proposition}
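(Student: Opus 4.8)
The plan is to follow the proof of Proposition~\ref{p:momentsH} line by line, with the Hilbert--Schmidt quantity $K(\varepsilon_1^2)$ replaced by the sup-type quantity $H(\varepsilon_1^2)=\sup_{0\le t\le1}\int\beta(t,s)\varepsilon_1^2(s)\,ds$ of Theorem~\ref{th:Cexist} and the norm $\|\cdot\|_\mathcal{H}$ replaced by $\|\cdot\|_\infty$ everywhere. The starting point is a series expansion of the stationary solution: using that $\beta$ is linear and $y_k=\varepsilon_k\sigma_k$, iterating \eqref{e:sigma} writes $\sigma_0^2$ as $\delta$ plus a finite sum of terms $\beta\big(\varepsilon_{-1}^2\,\beta(\varepsilon_{-2}^2\cdots\beta(\varepsilon_{-j}^2\delta)\cdots)\big)$, $j\ge1$, plus a remainder in which $\delta$ is replaced by $\sigma_{-n}^2$. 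Since Theorem~\ref{th:Cexist} guarantees a unique stationary solution in $\mathcal{C}[0,1]$, this remainder vanishes and we obtain the convergent representation
\[
\sigma_0^2=\delta+\sum_{j\ge1}\beta\big(\varepsilon_{-1}^2\,\beta(\varepsilon_{-2}^2\cdots\beta(\varepsilon_{-j}^2\delta)\cdots)\big)\qquad\text{in }\mathcal{C}[0,1].
\]

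The next ingredient is the elementary operator estimate that for non-negative $x\in\mathcal{C}[0,1]$ and non-negative measurable $h$,
\[
\|\beta(hx)\|_\infty=\sup_{0\le t\le1}\int\beta(t,s)h(s)x(s)\,ds\le\|x\|_\infty\sup_{0\le t\le1}\int\beta(t,s)h(s)\,ds.
\]
Applying this repeatedly with $h=\varepsilon_{-i}^2$, and recalling that $\delta\in\mathcal{C}[0,1]^+$ is bounded, gives $\big\|\beta(\varepsilon_{-1}^2\cdots\beta(\varepsilon_{-j}^2\delta)\cdots)\big\|_\infty\le\|\delta\|_\infty\prod_{i=1}^{j}H(\varepsilon_{-i}^2)$, so that the triangle inequality applied to the convergent series above yields
\[
\|\sigma_0^2\|_\infty\le\|\delta\|_\infty\Big(1+\sum_{j\ge1}\prod_{i=1}^{j}H(\varepsilon_{-i}^2)\Big).
\]

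Now raise to the power $\alpha$ and take expectations. Set $\theta=E\{H(\varepsilon_1^2)\}^\alpha$, which is $<1$ by hypothesis; since the $\varepsilon_k$ are i.i.d., $E\prod_{i=1}^{j}H(\varepsilon_{-i}^2)^\alpha=\theta^{j}$. If $0<\alpha\le1$, subadditivity of $u\mapsto u^\alpha$ together with Tonelli's theorem gives $E\{\|\sigma_0^2\|_\infty\}^\alpha\le\|\delta\|_\infty^\alpha\big(1+\sum_{j\ge1}\theta^{j}\big)=\|\delta\|_\infty^\alpha/(1-\theta)<\infty$; if $\alpha>1$, Minkowski's inequality for series of non-negative terms gives $\big(E\{\|\sigma_0^2\|_\infty\}^\alpha\big)^{1/\alpha}\le\|\delta\|_\infty\big(1+\sum_{j\ge1}\theta^{j/\alpha}\big)<\infty$ because $\theta^{1/\alpha}<1$. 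In either case $E\{\|\sigma_0^2\|_\infty\}^\alpha<\infty$, the analogue of \eqref{e:momH1}. For the remaining assertions, note that $\|\sigma_0\|_\infty=\|\sigma_0^2\|_\infty^{1/2}$ and $u^{\alpha/2}\le1+u^{\alpha}$, so $E\{\|\sigma_0\|_\infty\}^\alpha<\infty$ already follows (this covers the analogue of \eqref{e:momH2}); and since $y_0=\varepsilon_0\sigma_0$ pointwise we have $\|y_0\|_\infty\le\|\varepsilon_0\|_\infty\|\sigma_0\|_\infty$, where $\varepsilon_0$ is independent of $\sigma_0=g(\varepsilon_{-1},\varepsilon_{-2},\ldots)$ by \eqref{e:bernoullishift}, whence $E\{\|y_0\|_\infty\}^\alpha\le E\{\|\varepsilon_0\|_\infty\}^\alpha\,E\{\|\sigma_0\|_\infty\}^\alpha<\infty$, the analogue of \eqref{e:momH3}.

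I do not expect a serious obstacle, the whole argument being a transparent adaptation of the $\mathcal{H}$-case once the operator inequality $\|\beta(hx)\|_\infty\le\|x\|_\infty\sup_{0\le t\le1}\int\beta(t,s)h(s)\,ds$ is in place. The only points requiring a little care are the case $\alpha>1$, where Minkowski's inequality must replace the naive subadditivity of $u\mapsto u^\alpha$, and the interchange of summation and expectation, which is legitimate by Tonelli (all terms being non-negative) or, if one prefers, by first bounding uniformly in $n$ the moments of the truncations $\delta+\sum_{j=1}^{n}\beta(\varepsilon_{-1}^2\cdots\beta(\varepsilon_{-j}^2\delta)\cdots)$, which increase pointwise to $\sigma_0^2$, and then letting $n\to\infty$.
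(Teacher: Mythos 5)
Your argument is correct, but it takes a genuinely different route from the paper. The paper proves Proposition~\ref{p:momentsH} (and, with ``minor modifications'', Proposition~\ref{p:momentsC}) in two lines by recycling the Wu--Shao machinery already set up for the existence proof: writing $\sigma_0^2=Z_\infty$ and bounding $E\|Z_\infty\|^\alpha\le 2^\alpha\{E\|Z_1(0)\|^\alpha+E\|Z_1(0)-Z_\infty\|^\alpha\}$, where the second term is finite by the geometric bound $E\{\rho(Z_n(x),Z_\infty)\}^\alpha\le c_1r^n$ of Theorem~\ref{th:wu} and the first is just a norm of $\delta$; the step to $y_0$ via independence of $\varepsilon_0$ and $\sigma_0$ is the same in both arguments. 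You instead re-derive an explicit Volterra-type series for $\sigma_0^2$, bound each term through the contraction estimate $\|\beta(hx)\|_\infty\le\|x\|_\infty H(h)$, and sum a geometric series (subadditivity of $u\mapsto u^\alpha$ for $\alpha\le1$, Minkowski for $\alpha>1$) --- in effect re-proving the relevant part of the existence theorem. This buys a self-contained proof with an explicit constant, e.g.\ $E\{\|\sigma_0^2\|_\infty\}^\alpha\le\|\delta\|_\infty^\alpha/(1-\theta)$ for $\alpha\le1$, and it shows as a by-product that the analogue of \eqref{e:momH2} is automatic in the sup-norm setting since $\|\sigma_0\|_\infty=\|\sigma_0^2\|_\infty^{1/2}$; the paper's route is shorter because Theorem~\ref{th:wu} has already done the summation for it. One small point of rigor: your justification that the remainder term vanishes ``since Theorem~\ref{th:Cexist} guarantees a unique stationary solution'' is not quite the right reason as stated --- uniqueness alone does not identify your partial sums with $\sigma_0^2$; the clean justification is that the stationary solution is constructed in Theorem~\ref{th:wu} as the a.s.\ limit of the backward iterates $Z_n(0)$, which are exactly your partial sums (alternatively, bound the remainder by $\|\sigma_{-n}^2\|_\infty\prod_{i=1}^nH(\varepsilon_{-i}^2)$, which tends to $0$ in probability by stationarity of $\|\sigma_{-n}^2\|_\infty$ and $E\{\prod_{i=1}^nH(\varepsilon_{-i}^2)\}^\alpha=\theta^n\to0$). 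With that sentence repaired, the proof is complete.
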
We would like to point out that it is not assumed that the innovations $\varepsilon_k$ have finite variance. We only need that $\varepsilon_k$ have some moment of order  $\alpha>0$, where $\alpha>0$ can be as small as we wish.
Hence our model allows for innovations as well as observations with heavy tails.\\

According to Propositions~\ref{p:momentsH} and \ref{p:momentsC}, if the innovation $\varepsilon_0$ has enough
moments, then so does $\sigma_0^2$ and $y_0$. The next result shows a connection between
the moduli of continuity of $\varepsilon_0$ and $y_0$. Let
$$
\omega(x,h)=\sup_{0\leq t \leq 1-h}\sup_{0\leq s\leq h}|x(t+s)-x(t)|
$$
denote the modulus of continuity of a function $x(t)$.

\begin{Proposition}\label{p:continuity}
 We assume that the conditions of
Theorem~\ref{th:Cexist} are satisfied with $\alpha=p>0$. If
$
E\big\{\|\varepsilon_0\|_\infty\big\}^p<\infty
$ and
$
\lim_{h\to 0}E\big\{\omega(\varepsilon_0,h)\big\}^p = 0,
$ then
$
\lim_{h\to 0}E\big\{\omega(y_0,h)\big\}^p = 0.
$
\end{Proposition}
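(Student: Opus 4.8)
The plan is to transfer smoothness from the innovation $\varepsilon_0$ to $y_0$ by means of the multiplicative relation $y_0=\varepsilon_0\sigma_0$ and the smoothing relation $\sigma_0^2=\delta+\beta(y_{-1}^2)$. First I would split, for $t,t+s\in[0,1]$,
\[
y_0(t+s)-y_0(t)=\varepsilon_0(t+s)\bigl(\sigma_0(t+s)-\sigma_0(t)\bigr)+\sigma_0(t)\bigl(\varepsilon_0(t+s)-\varepsilon_0(t)\bigr),
\]
which gives $\omega(y_0,h)\le\|\varepsilon_0\|_\infty\,\omega(\sigma_0,h)+\|\sigma_0\|_\infty\,\omega(\varepsilon_0,h)$. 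Raising to the power $p$ and using $(a+b)^p\le 2^p(a^p+b^p)$ for $a,b\ge0$, it suffices to show that $E\{\|\varepsilon_0\|_\infty^p\,\omega(\sigma_0,h)^p\}$ and $E\{\|\sigma_0\|_\infty^p\,\omega(\varepsilon_0,h)^p\}$ tend to $0$ as $h\to0$. The structural point here is that, by the representation \eqref{e:bernoullishift} provided by Theorem~\ref{th:Cexist}, $\sigma_0^2$ — and hence $\sigma_0=\sqrt{\sigma_0^2}$ — is a measurable function of $\{\varepsilon_i:\,i\le-1\}$, so it is independent of $\varepsilon_0$; therefore both expectations factor. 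Since $E\{\|\varepsilon_0\|_\infty^p\}<\infty$ by hypothesis, and $E\{\|\sigma_0\|_\infty^p\}<\infty$ — because $E\{\|\sigma_0^2\|_\infty^p\}<\infty$ by the $\mathcal C[0,1]$-version of \eqref{e:momH1}, $\|\sigma_0^2\|_\infty=\|\sigma_0\|_\infty^2$, and Lyapunov's inequality — the second term equals $E\{\|\sigma_0\|_\infty^p\}\,E\{\omega(\varepsilon_0,h)^p\}\to0$, and the whole problem reduces to proving $E\{\omega(\sigma_0,h)^p\}\to0$.

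To handle $\sigma_0$ I would pass to $\sigma_0^2$: from $|\sqrt a-\sqrt b|\le\sqrt{|a-b|}$ ($a,b\ge0$) we get $\omega(\sigma_0,h)\le\sqrt{\omega(\sigma_0^2,h)}$, hence $E\{\omega(\sigma_0,h)^p\}\le E\{\omega(\sigma_0^2,h)^{p/2}\}$. By \eqref{e:sigma}, $\sigma_0^2=\delta+\beta(y_{-1}^2)$, so
\[
\omega(\sigma_0^2,h)\le\omega(\delta,h)+\omega\bigl(\beta(y_{-1}^2),h\bigr),
\]
where $\omega(\delta,h)$ is deterministic and tends to $0$ because $\delta\in\mathcal C[0,1]$ is uniformly continuous. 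It therefore remains to prove $E\{\omega(\beta(y_{-1}^2),h)^{p/2}\}\to0$.

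This is the step that carries the weight of the proof. Since $y_{-1}\in\mathcal C[0,1]$ we have $y_{-1}^2\in\mathcal C[0,1]^+$, and because $\beta$ maps $\mathcal C[0,1]^+$ into itself, $\beta(y_{-1}^2)$ is almost surely a continuous function on the compact interval $[0,1]$; hence $\omega(\beta(y_{-1}^2),h)\to0$ almost surely as $h\to0$. Moreover
\[
\omega(\beta(y_{-1}^2),h)\le 2\|\beta(y_{-1}^2)\|_\infty\le 2\|\beta(\mathbf 1)\|_\infty\,\|y_{-1}\|_\infty^2,
\]
using $\int\beta(t,s)y_{-1}^2(s)\,ds\le\|y_{-1}\|_\infty^2\,\beta(\mathbf 1)(t)$, where $\mathbf 1$ denotes the constant function $1\in\mathcal C[0,1]^+$. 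By stationarity and the $\mathcal C[0,1]$-version of \eqref{e:momH3} (applicable since $E\{\|\sigma_0\|_\infty^p\}<\infty$ and $E\{\|\varepsilon_0\|_\infty^p\}<\infty$) we have $E\{\|y_{-1}\|_\infty^p\}=E\{\|y_0\|_\infty^p\}<\infty$, so the dominating random variable $\bigl(2\|\beta(\mathbf 1)\|_\infty\,\|y_{-1}\|_\infty^2\bigr)^{p/2}$ is integrable. Dominated convergence then yields $E\{\omega(\beta(y_{-1}^2),h)^{p/2}\}\to0$, and combining this with the bound for $\omega(\delta,h)$ (one further application of $(a+b)^{p/2}\le 2^{p/2}(a^{p/2}+b^{p/2})$) gives $E\{\omega(\sigma_0^2,h)^{p/2}\}\to0$, closing the chain of reductions above.

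The main obstacle is exactly this last passage. It is not enough to observe that $\beta(y_{-1}^2)$ is continuous along each realization, because its modulus of continuity is itself random; one must pair the almost sure equicontinuity coming from the fact that the range of $\beta$ lies in $\mathcal C[0,1]$ with an integrable majorant, and the latter is furnished precisely by the moment bound $E\{\|y_0\|_\infty^p\}<\infty$ of Proposition~\ref{p:momentsC}. (If one instead records the quantitative continuity $\rho(h):=\sup_{0\le t\le 1-h}\sup_{0\le u\le h}\int|\beta(t+u,s)-\beta(t,s)|\,ds\to0$ as $h\to0$ — which is anyway implicit in $\beta$ mapping $\mathcal C[0,1]$ into itself — then $\omega(\beta(y_{-1}^2),h)\le\rho(h)\,\|y_{-1}\|_\infty^2$ and the conclusion $E\{\omega(\beta(y_{-1}^2),h)^{p/2}\}\le\rho(h)^{p/2}E\{\|y_0\|_\infty^p\}\to0$ is immediate.)
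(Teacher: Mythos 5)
Your argument is correct, and up to the final step it coincides with the paper's proof: the same splitting $\omega(y_0,h)\le\|\varepsilon_0\|_\infty\omega(\sigma_0,h)+\|\sigma_0\|_\infty\omega(\varepsilon_0,h)$, the same use of independence of $\varepsilon_0$ and $\sigma_0$ together with the moments supplied by Proposition~\ref{p:momentsC}, and the same passage $\omega(\sigma_0,h)\le\sqrt{\omega(\sigma_0^2,h)}$ via $|\sqrt a-\sqrt b|\le\sqrt{|a-b|}$ followed by \eqref{e:sigma}. Where you genuinely diverge is the treatment of $\omega(\beta(y_{-1}^2),h)$. The paper bounds it quantitatively by
$\sup_{0\le t\le 1-h}\sup_{0\le s\le h}\sup_{0\le r\le 1}|\beta(t+s,r)-\beta(t,r)|\cdot\int y_{-1}^2(r)\,dr$
and lets the deterministic kernel modulus tend to $0$, which tacitly assumes (uniform) continuity of the kernel $\beta(t,s)$ in its first argument, uniformly in the second --- a hypothesis not spelled out in Theorem~\ref{th:Cexist}. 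You instead use only the model assumption that $\beta$ maps $\mathcal C[0,1]^+$ into itself: each realization $\beta(y_{-1}^2)$ is uniformly continuous, so $\omega(\beta(y_{-1}^2),h)\to0$ almost surely, and the majorant $2\|\beta(\mathbf 1)\|_\infty\|y_{-1}\|_\infty^2$ (valid by non-negativity of the kernel) is $p/2$-integrable by Proposition~\ref{p:momentsC}, so dominated convergence finishes the step. Your route is thus slightly more economical in its assumptions, at the price of losing the explicit rate in $h$ that the paper's kernel-modulus bound delivers; both yield the stated qualitative conclusion.

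One caveat: the parenthetical claim that $\rho(h)=\sup_{0\le t\le 1-h}\sup_{0\le u\le h}\int|\beta(t+u,s)-\beta(t,s)|\,ds\to0$ is ``implicit in $\beta$ mapping $\mathcal C[0,1]$ into itself'' is not correct in general. That mapping property only gives continuity of $t\mapsto\int\beta(t,s)x(s)\,ds$ for each fixed continuous $x$ (a weak form of continuity of $t\mapsto\beta(t,\cdot)$), which does not imply $L^1$-continuity in $t$; a kernel such as $\beta(t,s)=1+\sin(s/t)$ (with $\beta(0,s)=1$) maps $\mathcal C[0,1]$ into itself but has $\int|\beta(t,s)-\beta(0,s)|\,ds\not\to0$. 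Since this remark is only an aside and your main argument relies on dominated convergence rather than on $\rho(h)$, it does not affect the validity of your proof, but the aside should be deleted or reformulated as an additional assumption on the kernel.
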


According to Theorems~\ref{th:Hweak} and \ref{th:Cexist}, the stationary
solution of \eqref{e:y} and \eqref{e:sigma} can be approximated
with stationary, weakly dependent sequences with values in $\mathcal{H}$ and in
$\mathcal{C}[0,1]$, respectively. We provide two further results which establish the weak dependence
structure of $\{y_k\}$.

\begin{Proposition}\label{p:wdH}
We assume that the conditions of Theorem~\ref{th:Hexist} are satisfied
with $\alpha=\frac{p}{2}$ and
\begin{equation}\label{e:finmome}
E\big\{\|\varepsilon_0\|_\infty\big\}^p<\infty.
\end{equation}
 Then
\begin{equation}\label{e:wdH}
E\big\{\|y_k-y_{km}\|_\mathcal{H}\big\}^p\leq c\gamma^m,\quad -\infty<k<\infty, m\geq 1,
\end{equation}
with some $0<c<\infty$ and $0<\gamma<1$, where $y_{km}=\varepsilon_k\sigma_{km}$.
\end{Proposition}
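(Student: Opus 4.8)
The plan is to express the difference $y_k-y_{km}=\varepsilon_k(\sigma_k-\sigma_{km})$ and then pull the factor $\varepsilon_k$ out of the norm using the pointwise-multiplication structure. Since $\|\varepsilon_k(\sigma_k-\sigma_{km})\|_\mathcal{H}^2=\int \varepsilon_k^2(t)(\sigma_k(t)-\sigma_{km}(t))^2\,dt\le \|\varepsilon_k\|_\infty^2\,\|\sigma_k-\sigma_{km}\|_\mathcal{H}^2$, we obtain the pointwise bound $\|y_k-y_{km}\|_\mathcal{H}\le \|\varepsilon_k\|_\infty\,\|\sigma_k-\sigma_{km}\|_\mathcal{H}$. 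Crucially, $\varepsilon_k$ is independent of both $\sigma_k$ and $\sigma_{km}$: the former is a function of $\{\varepsilon_j:j\le k-1\}$ (and its primed copies) by the Bernoulli-shift representation \eqref{e:bernoullishift}, and the latter was defined in the statement to depend on the same past indices. Hence, raising to the $p$-th power and taking expectations,
\begin{equation*}
E\big\{\|y_k-y_{km}\|_\mathcal{H}\big\}^p\le E\big\{\|\varepsilon_k\|_\infty^p\big\}\cdot E\big\{\|\sigma_k-\sigma_{km}\|_\mathcal{H}^p\big\}.
\end{equation*}
The first factor is finite by assumption \eqref{e:finmome}.

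It remains to control $E\{\|\sigma_k-\sigma_{km}\|_\mathcal{H}^p\}$. Here I would use the elementary inequality $|a-b|\le |a^2-b^2|^{1/2}$ valid for nonnegative reals $a,b$ (applied pointwise with $a=\sigma_k(t)$, $b=\sigma_{km}(t)$), which gives $\|\sigma_k-\sigma_{km}\|_\mathcal{H}^2=\int(\sigma_k(t)-\sigma_{km}(t))^2\,dt\le \int|\sigma_k^2(t)-\sigma_{km}^2(t)|\,dt=\|\,\sigma_k^2-\sigma_{km}^2\,\|_{L^1}$. By the Cauchy--Schwarz inequality on the unit interval, $\|\cdot\|_{L^1}\le\|\cdot\|_\mathcal{H}$, so $\|\sigma_k-\sigma_{km}\|_\mathcal{H}^2\le \|\sigma_k^2-\sigma_{km}^2\|_\mathcal{H}$, i.e. $\|\sigma_k-\sigma_{km}\|_\mathcal{H}^p\le \|\sigma_k^2-\sigma_{km}^2\|_\mathcal{H}^{p/2}$. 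Taking expectations and invoking Theorem~\ref{th:Hweak} with the exponent $\alpha=p/2$ (which is exactly the hypothesis imposed here), we get $E\{\|\sigma_k-\sigma_{km}\|_\mathcal{H}^p\}\le E\{\|\sigma_k^2-\sigma_{km}^2\|_\mathcal{H}^{p/2}\}\le c\,r^m$ for some $0<r<1$. Combining the two bounds yields \eqref{e:wdH} with $\gamma=r$ and the product of the two constants.

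The only genuinely delicate point is the \emph{independence} step: one must verify that the "approximant" $\sigma_{km}$ is indeed measurable with respect to the $\sigma$-algebra generated by $\{\varepsilon_j,\varepsilon_j':j\le k-1\}$ and hence independent of $\varepsilon_k$, so that the expectation factorizes. This is immediate from the definition of $\sigma_{km}^2$ given just before Theorem~\ref{th:Hweak} together with the fact that $g$ is the measurable functional from \eqref{e:bernoullishift}; but it is worth stating explicitly since without it the clean factorization above would fail. Everything else is a routine chain of the inequalities $|a-b|\le|a^2-b^2|^{1/2}$, Cauchy--Schwarz, and the already-established geometric decay in Theorem~\ref{th:Hweak}.
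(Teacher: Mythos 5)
Your proof is correct, and it fills in exactly the argument the paper leaves implicit (Propositions~\ref{p:wdH} and \ref{p:wdC} are stated without proof): factor $y_k-y_{km}=\varepsilon_k(\sigma_k-\sigma_{km})$, use independence of $\varepsilon_k$ from the $\sigma(\varepsilon_j,\varepsilon_j':j\le k-1)$-measurable pair $(\sigma_k,\sigma_{km})$ to split the expectation, and reduce to Theorem~\ref{th:Hweak} via $|\sqrt a-\sqrt b|\le\sqrt{|a-b|}$ and Cauchy--Schwarz. These are precisely the ingredients the authors use in the proofs of Propositions~\ref{p:momentsH} and \ref{p:continuity}, so your route coincides with the intended one; no gaps.
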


It follows from the definitions that the distribution of the $y_k-y_{km}$
does not depend on $k$. Hence the expected value in \eqref{e:wdH} does not depend
on $k$. A similar result holds in $F=\mathcal{C}[0,1]$ under the sup-norm.
\begin{Proposition}\label{p:wdC}
We assume that the conditions of Theorem~\ref{th:Cexist} are satisfied
with $\alpha=\frac{p}{2}$ and that \eqref{e:finmome} holds. Then
\begin{equation}\label{e:wdC}
\sup_{0\leq t\leq 1}E\big |y_k-y_{km}|^p\leq c\gamma^m,\quad -\infty<k<\infty, m\geq 1,
\end{equation}
with some $0<c<\infty$ and $0<\gamma<1$, where $y_{km}=\varepsilon_k\sigma_{km}$.
\end{Proposition}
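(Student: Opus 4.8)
The plan is to reduce everything to the $\mathcal{C}[0,1]$-analogue of Theorem~\ref{th:Hweak} contained in Theorem~\ref{th:Cexist}, which already controls $E\{\|\sigma_k^2-\sigma_{km}^2\|_\infty\}^{\alpha}$ for $\alpha=\frac{p}{2}$, and to combine it with the pointwise factorization $y_k-y_{km}=\varepsilon_k(\sigma_k-\sigma_{km})$ and an independence argument. The proof will run in parallel to that of Proposition~\ref{p:wdH}, with the Hilbert-space norm replaced by the pointwise sup-bound.

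First I would fix $t\in[0,1]$ and write $y_k(t)-y_{km}(t)=\varepsilon_k(t)\big(\sigma_k(t)-\sigma_{km}(t)\big)$. To pass from the squared volatilities, for which a geometric approximation is available, to the volatilities themselves, I would use the elementary inequality $|a-b|\le|a^2-b^2|^{1/2}$, valid for all $a,b\ge 0$ because $a+b\ge|a-b|$. This gives
$$
|\sigma_k(t)-\sigma_{km}(t)|\le|\sigma_k^2(t)-\sigma_{km}^2(t)|^{1/2}\le\|\sigma_k^2-\sigma_{km}^2\|_\infty^{1/2},
$$
and hence
$$
|y_k(t)-y_{km}(t)|^p\le\|\varepsilon_k\|_\infty^{p}\,\|\sigma_k^2-\sigma_{km}^2\|_\infty^{p/2}.
$$
This square-root step is the only mildly delicate point: it circumvents dividing by $\sigma_k(t)+\sigma_{km}(t)$, which could be arbitrarily small, and it is why the moment assumption is imposed with $\alpha=\frac{p}{2}$ rather than with $\alpha=p$.

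Next I would invoke independence. Since $\sigma_k^2=g(\varepsilon_{k-1},\varepsilon_{k-2},\ldots)$ and $\sigma_{km}^2$ is built from $\varepsilon_{k-1},\ldots,\varepsilon_{k-m}$ together with independent copies of the remaining innovations, the variable $\|\sigma_k^2-\sigma_{km}^2\|_\infty$ is measurable with respect to a $\sigma$-algebra that is independent of $\varepsilon_k$, hence independent of $\|\varepsilon_k\|_\infty$. Taking expectations and using stationarity of $\{\varepsilon_k\}$,
$$
E|y_k(t)-y_{km}(t)|^p\le E\big\{\|\varepsilon_0\|_\infty^{p}\big\}\,E\big\{\|\sigma_k^2-\sigma_{km}^2\|_\infty^{\alpha}\big\},\qquad\alpha=\frac{p}{2}.
$$
The first factor is finite by \eqref{e:finmome}, and by Theorem~\ref{th:Cexist} the second factor is at most $cr^{m}$ for some $0<r<1$. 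The resulting bound is independent of $t$, so taking the supremum over $t\in[0,1]$ yields \eqref{e:wdC} with $\gamma=r$ and the constant $c$ replaced by $c\,E\{\|\varepsilon_0\|_\infty^{p}\}$. Finally, as already noted for \eqref{e:wdH}, the distribution of $y_k-y_{km}$ does not depend on $k$, which gives the stated uniformity in $k$ and completes the argument. I do not expect any serious obstacle here; the content is essentially the square-root trick plus the independence of $\varepsilon_k$ from the past-innovation $\sigma$-algebra.
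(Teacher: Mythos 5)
Your proof is correct, and it uses exactly the ingredients the paper intends for this (omitted) proof: the factorization $y_k-y_{km}=\varepsilon_k(\sigma_k-\sigma_{km})$, the inequality $|\sqrt{a}-\sqrt{b}|\leq\sqrt{|a-b|}$ (the same trick the paper employs in the proof of Proposition~\ref{p:continuity}), the independence of $\varepsilon_k$ from $\sigma_k$ and $\sigma_{km}$, and the geometric bound of Theorem~\ref{th:Cexist} with $\alpha=\frac{p}{2}$. No gaps; this is essentially the paper's own argument.
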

As in case of Proposition~\ref{p:wdH}, the expected value in \eqref{e:wdC} does not
depend on $k$.

\section{Estimation}
\label{s:estimation}
In this section we propose estimators for the function $\delta$ and the operator $\beta$ in model
\eqref{e:y}--\eqref{e:sigma} which are not known in practice. The procedure is developed for the
important case where $F=\mathcal{H}$ and $\beta$ is given as in \eqref{e:betakernel}.
We show that our problem is related to the estimation of the autocorrelation operator in
the ARH(1) model which has been intensively studied in Bosq~\cite{bosq:2000}.
However, the theory developed in Bosq~\cite{bosq:2000} is not directly applicable
as it requires independent innovations in the ARH(1) process, whereas, as we will see
below, we can only assume weak white noise (in Hilbert space sense).

We will impose the following
\begin{Assumption}\label{a:estimation}
\begin{description}
\item[(a)] $E\varepsilon_0^2(t)=1$ for any $t\in[0,1]$.
\item[(b)] The assumptions of Theorem~\ref{th:Hweak} hold with $\alpha=2$.
\end{description}
\end{Assumption}
Assumption~\ref{a:estimation} (a) is needed to guarantee the identifiability of the model.
Part (b) of the assumption guarantees the existence of a stationary solution of the model
\eqref{e:y}--\eqref{e:sigma} with moments of order 4. It is necessary
to make the moment based estimator proposed below working.
An immediate consequence of Assumption~\ref{a:estimation}
is that \eqref{e:bounded_beta} holds, i.e.\ $\beta$ is a Hilbert Schmidt operator.

We let $m_2$ denote the mean function of the $y_k^2$ and introduce
$$
\nu_k = y_k^2-\sigma_k^2=\{(\varepsilon_k^2(s)-1)\sigma_k^2(s),\,s\in[0,1]\}.
$$
Then by adding $\nu_k$ on both sides of \eqref{e:sigma} we obtain
$$
y_k^2=\delta+\beta(y_{k-1}^2)+\nu_k.
$$
Since $\beta$ is a linear operator we obtain after subtracting $m_2$ on both sides of the
above equation
\begin{equation}\label{e:ar_rep}
y_k^2-m_2=\delta-m_2+\beta(m_2)+\beta(y_{k-1}^2-m_2)+\nu_k.
\end{equation}
It can be easily seen that under Assumption~\ref{a:estimation} $E\nu_k=0$ (where 0 stands
for the zero function). Notice also that the expectation commutes with bounded
operators, and hence
that $E(\beta(y_k^2-m_2))=\beta(E(y_k^2-m_2))=0$.
Consequently, taking expectations on both sides of \eqref{e:ar_rep} yields that
\begin{equation}\label{e:delta}
\delta-m_2+\beta(m_2)=0.
\end{equation}
Thus, \eqref{e:ar_rep} can be rewritten in the form
\begin{equation}\label{e:ar_rep1}
Z_k=\beta(Z_{k-1})+\nu_k\quad \text{with}\quad Z_k=y_k^2-m_2.
\end{equation}

Model \eqref{e:ar_rep1} is the autoregressive Hilbertian model of order 1, short ARH(1). For estimating the autocorrelation operator $\beta$ we may use the estimator proposed in Bosq~\cite[Chapter 8]{bosq:2000}.
We need to be aware, however, that the theory in \cite{bosq:2000}
has been developed for ARH processes with strong white noise innovations, i.e.\ independent innovations $\{\nu_k\}$. In our setup the
$\{\nu_k\}$ form only a {\em weak white noise} sequence, i.e.\ for any $n\neq m$ we have
$$
E\|\nu_n\|_\mathcal{H}^2<\infty\quad\text{and}\quad E\langle \nu_n,x\rangle\langle \nu_m,y\rangle=0\quad\forall x,y\in \mathcal{H},
$$
and the covariance operator of $\nu_n$ is independent of $n$.
Thus the theory in \cite{bosq:2000} cannot be directly applied.
We will study the estimation of $\beta$ in Section~\ref{ss:far}.

Once $\beta$ is estimated by some $\hat\beta$ say,
we obtain an estimator for $\delta$ via equation \eqref{e:delta}:
\begin{equation}\label{e:hat_delta}
\hat\delta=\hat{m}_2-\hat\beta(\hat{m}_2),
\end{equation}
where we use
\begin{equation}\label{e:estim_s}
\hat{m}_2=\frac{1}{N}\sum_{k=1}^N y_k^2.
\end{equation}

Let
$
\|\beta\|_\mathcal{L}=\sup_{x\in\mathcal{H}}\{\|\beta(x)\|_\mathcal{H}:\,\|x\|\leq 1\}
$
be the operator norm of $\beta$. Recall that $\|\beta\|_\mathcal{L}\leq \|\beta\|_\mathcal{S}$. The following Lemma shows that consistency of $\hat\beta$
implies consistency of $\hat\delta$.
\begin{Lemma}
Let Assumption~\ref{a:estimation} hold. Let $\hat\delta=\hat\delta_N$ be given as in \eqref{e:hat_delta}.
Then
$$
\|\hat\delta_N-\delta\|_\mathcal{H}=O_P(1)\times\left(N^{-1/2}+
\|\hat\beta_N-\beta\|_\mathcal{L}\right).
$$
\end{Lemma}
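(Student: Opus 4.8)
The plan is to estimate $\|\hat\delta_N - \delta\|_\mathcal{H}$ by writing it as a difference of the estimator \eqref{e:hat_delta} and the identity \eqref{e:delta}, and then splitting it into a term coming from the estimation of $m_2$ and a term coming from the estimation of $\beta$. Concretely, from $\hat\delta_N = \hat m_2 - \hat\beta_N(\hat m_2)$ and $\delta = m_2 - \beta(m_2)$ we obtain
\begin{equation*}
\hat\delta_N - \delta = (\hat m_2 - m_2) - \bigl(\hat\beta_N(\hat m_2) - \beta(m_2)\bigr).
\end{equation*}
For the second bracket I would insert and subtract a mixed term, e.g. $\hat\beta_N(m_2)$, to get
\begin{equation*}
\hat\beta_N(\hat m_2) - \beta(m_2) = \hat\beta_N(\hat m_2 - m_2) + (\hat\beta_N - \beta)(m_2).
\end{equation*}
Hence $\|\hat\delta_N - \delta\|_\mathcal{H} \le \|\hat m_2 - m_2\|_\mathcal{H}\bigl(1 + \|\hat\beta_N\|_\mathcal{L}\bigr) + \|(\hat\beta_N - \beta)(m_2)\|_\mathcal{L}$, and bounding the last term by $\|\hat\beta_N - \beta\|_\mathcal{L}\,\|m_2\|_\mathcal{H}$ puts everything in the right shape.

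The next step is to control $\|\hat m_2 - m_2\|_\mathcal{H}$. Here $\hat m_2 = N^{-1}\sum_{k=1}^N y_k^2$ is a sample mean of the stationary sequence $\{y_k^2\}$, which by Theorem~\ref{th:Hexist} is a Bernoulli shift, and under Assumption~\ref{a:estimation}(b) (with $\alpha=2$) has $E\{\|\sigma_0^2\|_\mathcal{H}\}^2<\infty$ and, combined with Assumption~\ref{a:estimation}(a), $E\{\|y_0^2\|_\mathcal{H}\}^2<\infty$ as well; moreover Theorem~\ref{th:Hweak} (plus the remark turning it into an $m$-dependence approximation) gives a geometrically decaying $L^1$-approximation rate for $\sigma_k^2$, which transfers to $y_k^2$ since the $\varepsilon_k$ have a second moment. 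A standard argument then shows $E\|\hat m_2 - m_2\|_\mathcal{H}^2 = O(N^{-1})$: expand the square, use stationarity, and bound $\sum_k |E\langle y_0^2 - m_2, y_k^2 - m_2\rangle|$ by a convergent series using the weak-dependence bound together with Cauchy--Schwarz. Consequently $\|\hat m_2 - m_2\|_\mathcal{H} = O_P(N^{-1/2})$.

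It remains to note that $\|\hat\beta_N\|_\mathcal{L} \le \|\beta\|_\mathcal{L} + \|\hat\beta_N - \beta\|_\mathcal{L} = O_P(1)$ and that $\|m_2\|_\mathcal{H}$ is a finite deterministic constant (finiteness following from $E\|y_0^2\|_\mathcal{H}<\infty$ and Jensen's inequality). Plugging these into the decomposition above yields
\begin{equation*}
\|\hat\delta_N - \delta\|_\mathcal{H} \le O_P(N^{-1/2})\cdot O_P(1) + O_P(1)\cdot\|\hat\beta_N - \beta\|_\mathcal{L} = O_P(1)\times\bigl(N^{-1/2} + \|\hat\beta_N - \beta\|_\mathcal{L}\bigr),
\end{equation*}
which is the claim. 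The only genuinely technical point is the rate $E\|\hat m_2 - m_2\|_\mathcal{H}^2 = O(N^{-1})$; everything else is algebra and bookkeeping. I expect that bound to be invoked either as a known fact about weakly dependent functional sequences (cf.\ the references \cite{aue:hormann:horvath:huskova:steinbach:2009+}, \cite{hoermann:kokoszka:2010} cited after Theorem~\ref{th:Hweak}) or proved in a couple of lines from the covariance-summability estimate sketched above, so the main obstacle is merely making sure the moment assumptions of Theorem~\ref{th:Hweak} are enough to guarantee the covariances $E\langle y_0^2 - m_2, y_k^2 - m_2\rangle$ are summable.
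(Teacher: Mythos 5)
Your proposal is correct and follows essentially the same route as the paper: a triangle-inequality decomposition of $\hat\delta_N-\delta$ into an $\hat m_2$-error part and a $\|\hat\beta_N-\beta\|_\mathcal{L}$ part, plus the bound $E\|\hat m_2-m_2\|_\mathcal{H}^2=O(N^{-1})$ obtained from stationarity, the coupling independence of $y_0^2$ and $y_{kk}^2$, Cauchy--Schwarz and Theorem~\ref{th:Hweak}. The only cosmetic difference is the inserted cross term: the paper uses $\beta(\hat m_2)$, so it only needs the deterministic $\|\beta\|_\mathcal{L}$ and $\|\hat m_2\|_\mathcal{H}=O_P(1)$, whereas your choice $\hat\beta_N(m_2)$ brings in $\|\hat\beta_N\|_\mathcal{L}$; its $O_P(1)$ bound is not actually needed, since the term $\|\hat m_2-m_2\|_\mathcal{H}\,\|\hat\beta_N-\beta\|_\mathcal{L}$ can simply be absorbed into the $O_P(1)\,\|\hat\beta_N-\beta\|_\mathcal{L}$ part of the claimed bound.
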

\begin{proof}
We have
\begin{align*}
\|\hat\delta_N-\delta\|_\mathcal{H}&\leq \|\hat{m}_2-m_2\|_\mathcal{H}+\|\hat\beta(\hat{m}_2)-\beta(\hat{m}_2)\|_\mathcal{H}+\|\beta(\hat{m}_2)-\beta(m_2)\|_\mathcal{H}\\
&\leq \|\hat{m}_2-m_2\|_\mathcal{H}+\|\hat\beta-\beta\|_\mathcal{L}\|\hat{m}_2\|_\mathcal{H}+\|\beta\|_\mathcal{L}\|\hat{m}_2-m_2\|_\mathcal{H}.
\end{align*}
The result follows once we can show that $\|\hat{m}_2-m_2\|_\mathcal{H}=\|\hat{m}_{2,N}-m_2\|_\mathcal{H}=O_P(N^{-1/2})$.
To this end we notice that by stationarity of $\{y_k^2\}$
\begin{align*}
 E\|\hat{m}_2-m_2\|_\mathcal{H}^2&=E\left\|
\frac{1}{N}\sum_{k=1}^N (y_k^2-m_2)
\right\|_\mathcal{H}^2\\
&=\frac{1}{N}\sum_{k=-(N-1)}^{N-1}\left(1-\frac{|k|}{N}\right)E\left\langle y_0^2-m_2,y_k^2-m_2\right\rangle\\
&\leq \frac{1}{N}\left(E\left\|y_0^2-m_2\right\|_\mathcal{H}+2\sum_{k=1}^{\infty}\left|E\left\langle y_0^2-m_2,y_k^2-m_2\right\rangle\right|\right).
\end{align*}
By construction $y_0^2$ and the approximation $y_{kk}^2$ are independent. Repeated application of the Cauchy-Schwarz inequality together with Assumption~\ref{a:estimation} (a) yield that
\begin{align*}
&\left|E\left\langle y_0^2-m_2,y_k^2-m_2\right\rangle\right|=\left|E\left\langle y_0^2-m_2,y_k^2-y_{kk}^2\right\rangle\right|\\
&\qquad  \leq \left(E\left\|y_0^2-m_2\right\|_\mathcal{H}^2\right)^{1/2}\left(E\left\|y_k^2-y_{kk}^2\right\|_\mathcal{H}^2\right)^{1/2}\\
&\qquad  = \left(E\left\|\sigma_0^2-m_2\right\|_\mathcal{H}^2\right)^{1/2}\left(E\left\|\sigma_k^2-\sigma_{kk}^2\right\|_\mathcal{H}^2\right)^{1/2}.
\end{align*}
Combining these estimates with Theorem~\ref{th:Hweak} shows that $E\left\|\hat{m}_2-m_2\right\|_\mathcal{H}=O(N^{-1/2})$.
\end{proof}

\subsection{Estimation of $\beta$}\label{ss:far}
We now turn to the estimation of the autoregressive operator $\beta$ in the
ARH(1) model \eqref{e:ar_rep1}.
It is instructive to focus first on the univariate case
$Z_n = \beta Z_{n-1} + \nu_n$, in which all quantities are scalars.
We assume $E\nu_n=0$ which implies $EZ_n=0$.
We also assume that $|\beta|<1$, so that there is
a stationary solution such that $\nu_n$ is uncorrelated with $Z_{n-1}$.
Then, multiplying the AR(1) equation by $Z_{n-1}$ and taking the expectation,
we obtain $\gamma_1 = \beta \gamma_0$, where
$\gamma_k = E[Z_n Z_{n+k}] = \mathrm{cov}(Z_n, Z_{n+k})$.
The autocovariances $\gamma_k$ are estimated in the usual way by the sample
autocovariances
$$
\hat \gamma_k = \frac{1}{N} \sum_{j=1}^{N-k} Z_j Z_{j+k},
$$
so the usual estimator of $\beta$ is $\hat \beta = \hat \gamma_1/\hat \gamma_0$.
This is the so-called {\em Yule-Walker estimator} which is optimal in many ways,
see Chapter 8 of Brockwell and Davis~\cite{brockwell:davis:1991}.

In the functional setup we will replace condition $|\beta|<1$ with $\|\beta\|_\mathcal{S}<1$. Notice that this
condition is guaranteed by Assumption~\ref{a:estimation} and that it will imply the existence of a
weakly stationary solution of \eqref{e:ar_rep1} of the form
$$
Z_n=\sum_{j\geq 0}\beta^j(\nu_{n-j}),
$$
where $\beta^j$ is the $j$-times iteration of the operator $\beta$ and $\beta^0$ is
the identity mapping. The estimator
for the operator $\beta$ obtained in \cite{bosq:2000}
is formally analogue to the scalar case. We need instead of  $\gamma_0$ and $\gamma_1$
the covariance operator
$$
C_0(\cdot)=E\left[
\langle Z_1,\cdot\rangle Z_1
\right]
$$
and the cross-covariance operator
$$
C_1(\cdot)=E\left[
\langle Z_1,\cdot\rangle Z_2
\right].
$$
One can show by similar arguments as in the scalar case that
$$
\beta=C_1C^{-1}.
$$
To get an explicit form
let $\lambda_1\geq \lambda_2\geq \cdots$
be the eigenvalues of $C$ and let $e_1,e_2,\ldots$ be the corresponding
eigenfunctions, i.e.\ $C(e_i)=\lambda_ie_i$. We assume that
$e_j$ are normalized to satisfy $\|e_j\|_\mathcal{H}=1$. Then $\{e_j\}$
forms an orthonormal basis (ONB) of $\mathcal{H}$ and we obtain the
following spectral decomposition of the operator $C$:
\begin{equation}\label{e:spectral}
C(y)=\sum_{j\geq 1}\lambda_j\langle e_j,y\rangle e_j.
\end{equation}
From \eqref{e:spectral} we get formally that
\begin{equation}\label{e:cinv}
C^{-1}(y)=\sum_{j\geq 1}\lambda_j^{-1}\langle e_j,y\rangle e_j,
\end{equation}
and hence
\begin{align}
\beta(y)&=C_1C^{-1}(y)=E\left(\left\langle Z_1,\sum_{j\geq 1}\lambda_j^{-1}\langle e_j,y\rangle e_j
\right\rangle Z_2\right)\nonumber\\
&=\sum_{j\geq 1}\lambda_j^{-1}\langle e_j,y\rangle
E\left(\langle Z_1,e_j\rangle Z_2\right).\label{e:rep_beta_op}
\end{align}
Using $Z_2=\sum_{i\geq 1}\langle Z_2,e_i\rangle e_i$ we
obtain that the corresponding kernel is
\begin{equation}\label{e:rep_beta}
\beta(t,s)=\sum_{i,j\geq 1}\lambda_j^{-1}E\left(\langle Z_1,e_j\rangle\langle Z_{2},e_i \rangle \right) e_j(s)e_i(t).
\end{equation}

If $\lambda_j=0$ for all $j > p\geq 1$, then the covariance operator is finite rank and
we can replace \eqref{e:spectral} and \eqref{e:cinv} by finite expansions with the
sum going from $1$ to $p$. In this case, all our mathematical operations so far are
well justified. However, when all $\lambda_j>0$ then we need to be aware that $C^{-1}$ is not
bounded on $\mathcal{H}$. To see this note that
$\lambda_j\to 0$ if $j\to \infty$ (this follows from the fact that $C$ is a Hilbert-Schmidt operator).
Consequently,  $\|C^{-1}(e_j)\|_\mathcal{H}=\lambda_j^{-1}\to\infty$
for $j\to\infty$.
It can be easily seen that this operator is bounded only on
$$
D=\left\{y\in\mathcal{H}:\,\sum_{j\geq 1}\frac{\langle e_j,y\rangle^2}{\lambda_j^2}<\infty
\right\}.
$$
Nevertheless, we can show that the representation \eqref{e:rep_beta_op} holds
for all $y\in\mathcal{H}$ by using a direct expansion of $\beta(t,s)$.
Since the eigenfunctions $\{e_k,\, k\geq 1\}$ of $C$ form an ONB
of $\mathcal{H}$ it follows that $\{e_k\otimes e_\ell,\, k,\ell\geq 1\}$
($e_k\otimes e_\ell=\{e_k(s)e_\ell(t),\, (s,t)\in[0,1]^2\}$) forms an ONB
of $L^2([0,1]^2)=\mathcal{H}\otimes \mathcal{H}$. This is again a Hilbert space with
inner product
$$
\langle x,y\rangle_{\mathcal{H}\otimes \mathcal{H}}=\int\int x(t,s)y(t,s)dtds.
$$
Note that $\|\beta\|_{\mathcal{H}\otimes \mathcal{H}}=\|\beta\|_\mathcal{S}<\infty$
and hence the kernel function $\beta\in\mathcal{H}\otimes \mathcal{H}$. (Be aware, that for the sake of a lighter
notation we don't distinguish between kernel and operator $\beta$.) Consequently $\beta(t,s)$ has the
representation
$$
\beta=\sum_{k,\ell\geq 1}\beta_{k,\ell}e_k\otimes e_\ell.
$$
As we can write
$$
Z_{n+1}=\sum_{k,\ell\geq 1}\beta_{k,\ell}\langle Z_n,e_k\rangle e_\ell+v_{n+1}
$$
it follows that
$$
\langle Z_{n+1},e_i \rangle \langle Z_n,e_j\rangle=
\sum_{k\geq 1}\beta_{k,i}
\langle Z_{n},e_k\rangle \langle Z_n,e_j\rangle
+\langle \nu_{n+1},e_i \rangle \langle Z_n,e_j\rangle
$$
and by taking expectations on both sides of the above equation that
$$
E\langle Z_{2},e_i \rangle \langle Z_1,e_j\rangle=
\sum_{k\geq 1}\beta_{k,i}
\langle C(e_k),e_j\rangle=\beta_{j,i}\lambda_j.
$$
Here we used the fact that $\{\nu_k\}$ is weak white noise. It implies that
$E\langle B(v_k),x\rangle\langle v_\ell,y\rangle$ is zero for any bounded
operator $B$ and all $x,y\in \mathcal{H}$ and all $k\neq \ell$. Hence the expansion
$Z_k=\sum_{j\geq 0}\beta^j(\nu_{k-j})$ provides $E\langle \nu_{n+1},e_i \rangle \langle Z_n,e_j\rangle=0$. This shows again \eqref{e:rep_beta}.

We would like to obtain now an estimator for $\beta$ by using a finite
sample version of the above relations.
To this end we set
$$
\hat{C}(y)=\frac{1}{N}\sum_{k=1}^N
\langle Z_k,y\rangle Z_k
\quad\text{and}\quad
\hat{C}_1(y)=\frac{1}{N}\sum_{k=1}^{N-1}
\langle Z_k,y\rangle Z_{k+1},\quad y\in \mathcal{H}.
$$
The estimator in \cite{bosq:2000} and the estimator we also propose here
is of the form
$$
\hat{\beta}(y;K)=\pi_K\hat{C}_1\widehat{C^{-1}}(y;K),
$$
where
\begin{equation}\label{e:cinv_s}
\widehat{C^{-1}}(y;K)=\sum_{j=1}^K\hat\lambda_j^{-1}\langle \hat{e}_j, y\rangle \hat{e}_j,
\end{equation}
$(\hat\lambda_j,\hat{e}_j)$ are the eigenvalues (in descending order) and
the corresponding eigenfunctions
of $\hat C$ and $p_K$ is the orthogonal projection onto the subspace \ $\mathrm{span}(\hat e_1,
\ldots, \hat e_K)$.
We notice that this estimator is not depending on the sign
of the $\hat e_j$'s.
The corresponding kernel is given as
\begin{equation}\label{e:betahat}
\hat{\beta}(t,s;K)=\frac{1}{N-1}\sum_{k=1}^{N-1}\sum_{j=1}^K\sum_{i=1}^K
\hat\lambda_j^{-1}\langle Z_k,\hat{e}_j\rangle \langle Z_{k+1},\hat{e}_i\rangle
\hat{e}_j(s)\hat{e}_i(t),
\end{equation}
and the signs of the $\hat e_j$ cancel out.
In practice eigenvalues and eigenfunctions
of an empirical covariance operator can be conveniently computed
with the package {\tt fda} for the statistical software {\tt R}.
The estimator \eqref{e:betahat} is the empirical version of the finite
expansion
$$
\beta(t,s;K)=\sum_{i=1}^K\sum_{j=1}^K\lambda_j^{-1}E\left(\langle Z_1,e_j\rangle\langle Z_{2},e_i \rangle \right) e_j(s)e_i(t)
$$
of \eqref{e:rep_beta}.

If the innovations $\{\nu_k\}$ are i.i.d.\ Bosq~\cite{bosq:2000} proves under some technical conditions
consistency of the estimator \eqref{e:betahat} when $K=K(N)$:
$$
\|\beta-\hat\beta(K(N))\|_\mathcal{L}=o_P(1)\quad \text{as $N\to\infty$.}
$$
The choice of
$K(N)$ depends on the decay rate  of the eigenvalues, which is not known in
practice. Empirical results (see Didericksonet~al.~\cite{did:kok:zhang}) show that in the finite
sample case $K=2,3,4$ provides best results. The reason why choosing small $K$ is often
favorable is due to a bias variance
trade off. Note that the eigenvalues occur reciprocal in the estimator $\hat\beta$
and thus larger $K$ accounts for larger instability if the eigenvalues
are close to zero. A practical approach is to chose $K$ the largest integer for
which $\hat{\lambda}_K/\hat \lambda_1\geq \gamma$, where $\gamma$ is some threshold.

\begin{Theorem}\label{th:consist}
Fix some $K\geq 1$.
Assume that the $K+1$ largest eigenvalues of the covariance operator
$C$ of $Z_k$ satisfy $\lambda_1>\lambda_2>\ldots>\lambda_{K+1}>0$.
Let $\beta(K)$ and $\hat\beta(K)$ be the operators belonging
to the kernel functions $\beta(t,s;K)$ and $\hat \beta(t,s;K)$, respectively. Let
Assumption~\ref{a:estimation} hold with condition (b) strengthened to $\alpha=4$.
Then we have
$$
\|\beta(K)-\hat\beta(K)\|_\mathcal{S}=O_P\left(N^{-1/2}\right)\quad \text{as $N\to\infty$.}
$$
\end{Theorem}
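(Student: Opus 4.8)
The strategy is to decompose the error $\hat\beta(K)-\beta(K)$ into (i) the estimation errors of the covariance and cross-covariance operators, $\hat C-C$ and $\hat C_1-C_1$, and (ii) the errors these induce in the leading $K$ empirical eigenelements $(\hat\lambda_j,\hat e_j)$, and then to reassemble the pieces via the finite-rank expansions underlying \eqref{e:betahat}. What makes the \emph{parametric} rate attainable here is that $K$ is fixed and $\lambda_1>\cdots>\lambda_{K+1}>0$: every spectral gap relevant to the first $K$ eigenelements is then a fixed strictly positive constant, nothing degenerates as $N\to\infty$, and $\hat\lambda_K^{-1}$ stays bounded in probability.

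\emph{Step 1: concentration of the sample (cross-)covariance operators.} First I would establish
$$
\|\hat C-C\|_\mathcal{S}=O_P(N^{-1/2})\qquad\text{and}\qquad\|\hat C_1-C_1\|_\mathcal{S}=O_P(N^{-1/2}).
$$
By \eqref{e:ar_rep1} and \eqref{e:bernoullishift}, $Z_k=\varepsilon_k^2\sigma_k^2-m_2$ is a Bernoulli shift in $\mathcal{H}$, and the $m$-dependent surrogates inherited from Theorem~\ref{th:Hweak} satisfy $Z_k-Z_{km}=\varepsilon_k^2(\sigma_k^2-\sigma_{km}^2)$. Under Assumption~\ref{a:estimation} with (b) strengthened to $\alpha=4$, this combined with the moment hypothesis on $\varepsilon_0$ and with Proposition~\ref{p:momentsH} yields $E\|Z_0\|_\mathcal{H}^4<\infty$ together with the geometric bound $E\|Z_k-Z_{km}\|_\mathcal{H}^4\le cr^m$, i.e.\ $\{Z_k\}$ is $L^4_\mathcal{H}$-$m$-approximable in the sense of \cite{hoermann:kokoszka:2010}. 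Hence $\{Z_k\otimes Z_k\}$ and $\{Z_k\otimes Z_{k+1}\}$ are stationary sequences in the Hilbert space $\mathcal{H}\otimes\mathcal{H}$ with geometrically decaying autocovariances, and, arguing exactly as in the proof of the Lemma above, $E\|\hat C-C\|_\mathcal{S}^2=O(N^{-1})$ and $E\|\hat C_1-C_1\|_\mathcal{S}^2=O(N^{-1})$; Markov's inequality then gives the display. (Alternatively, one may invoke the $\sqrt N$-consistency of empirical covariance operators for weakly dependent functional data established in \cite{hoermann:kokoszka:2010}.)

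\emph{Steps 2 and 3: perturbation and reassembly.} Since $\|\hat C-C\|_\mathcal{L}\le\|\hat C-C\|_\mathcal{S}$, Weyl's inequality gives $|\hat\lambda_j-\lambda_j|\le\|\hat C-C\|_\mathcal{L}=O_P(N^{-1/2})$ for all $j$, and, because for $j\le K$ the gap separating $\lambda_j$ from the rest of the spectrum is a fixed positive constant, the standard perturbation bounds for spectral projections (Chapter~4 of Bosq~\cite{bosq:2000}, or a Davis--Kahan argument) produce signs $\hat c_j\in\{-1,1\}$ with $\|\hat e_j-\hat c_je_j\|_\mathcal{H}=O_P(N^{-1/2})$ for $1\le j\le K$; in particular $\hat\lambda_j\ge\lambda_K/2$ on an event of probability tending to $1$, so $|\hat\lambda_j^{-1}-\lambda_j^{-1}|=|\hat\lambda_j-\lambda_j|/(\lambda_j\hat\lambda_j)=O_P(N^{-1/2})$ for $j\le K$. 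Rearranging \eqref{e:betahat} (and writing the corresponding expansion of $\beta(K)$), one has, up to the asymptotically negligible factor $N/(N-1)$,
$$
\hat\beta(K)=\sum_{i,j=1}^K\hat\lambda_j^{-1}\langle\hat C_1\hat e_j,\hat e_i\rangle\,\hat e_j\otimes\hat e_i,\qquad
\beta(K)=\sum_{i,j=1}^K\lambda_j^{-1}\langle C_1 e_j,e_i\rangle\, e_j\otimes e_i.
$$
Subtracting and telescoping each of the finitely many summands over the three factors $\hat\lambda_j^{-1}$, $\langle\hat C_1\hat e_j,\hat e_i\rangle$ and $\hat e_j\otimes\hat e_i$, and bounding every increment by the quantities controlled above --- using $\|\hat C_1-C_1\|_\mathcal{L}\le\|\hat C_1-C_1\|_\mathcal{S}$, the boundedness of $C_1$ ($\|C_1\|_\mathcal{L}\le E\|Z_0\|_\mathcal{H}^2<\infty$), the estimate $\|a\otimes b-a'\otimes b'\|_\mathcal{S}\le\|a-a'\|_\mathcal{H}\|b\|_\mathcal{H}+\|a'\|_\mathcal{H}\|b-b'\|_\mathcal{H}$ for unit vectors, and the $O_P(1)$ bounds on all multipliers --- gives $\|\hat\beta(K)-\beta(K)\|_\mathcal{S}=O_P(N^{-1/2})$. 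The unknown signs are harmless: each $\hat e_j$ enters \eqref{e:betahat} an even number of times, so replacing $\hat e_j$ by $\hat c_j\hat e_j$ leaves $\hat\beta(K)$ unchanged, which is exactly what matches the sign-free bounds above.

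\emph{Main obstacle.} The genuine work is Step~1 --- the finiteness of $E\|Z_0\|_\mathcal{H}^4$ and the geometric decay of the autocovariances of the operator-valued processes $\{Z_k\otimes Z_k\}$ and $\{Z_k\otimes Z_{k+1}\}$. This is precisely why condition (b) of Assumption~\ref{a:estimation} is strengthened from $\alpha=2$ to $\alpha=4$: only then does Theorem~\ref{th:Hweak} supply $E\|\sigma_k^2-\sigma_{km}^2\|_\mathcal{H}^4\le cr^m$ and Proposition~\ref{p:momentsH} the fourth moments that a $\sqrt N$-rate for $\hat C$ and $\hat C_1$ requires. Once Step~1 is in hand, Steps~2 and~3 amount to routine finite-dimensional perturbation analysis, and the reason one obtains the clean $N^{-1/2}$ rate (rather than the slower, $K(N)$-dependent rate of the full consistency statement) is exactly that $K$ is fixed and no eigenvalue gap shrinks.
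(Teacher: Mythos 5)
Your proposal is correct and follows essentially the same route as the paper: both rest on the $L^4$--$m$--approximability of $\{Z_k\}$ obtained from Theorem~\ref{th:Hweak} with $\alpha=4$, on $O_P(N^{-1/2})$ rates for the sign-corrected eigenelements $(\hat\lambda_j,\hat c_j\hat e_j)$, and on telescoping the finite double sum defining $\beta(K)$ and $\hat\beta(K)$, with the sign ambiguity neutralized because each $\hat e_j$ enters the estimator an even number of times. The only real difference is bookkeeping: you obtain the eigenelement rates from operator-level concentration $\|\hat C-C\|_\mathcal{S}=O_P(N^{-1/2})$, $\|\hat C_1-C_1\|_\mathcal{S}=O_P(N^{-1/2})$ combined with Weyl/Davis--Kahan (equivalently Bosq's perturbation lemmas), and you control the cross terms $\langle\hat C_1\hat e_j,\hat e_i\rangle$ through $\|\hat C_1-C_1\|_\mathcal{L}$, whereas the paper quotes Theorem~3.2 of H\"ormann and Kokoszka~\cite{hoermann:kokoszka:2010} for the eigenelements and bounds the empirical cross-covariances $\hat\sigma_{i,j}$ directly via the scalar processes $\langle Z_k,e_i\rangle\langle Z_{k+1},e_j\rangle$, whose autocovariances are shown to be absolutely summable by the same $m$-dependent approximation argument; the two devices are interchangeable and of comparable effort.
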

In Theorem \eqref{th:consist} $N$ obviously denotes the sample size
which is suppressed in the notation.
The proof of the theorem is given in Section~\ref{s:proofs}. Our conditions
imply that $E\|Z_k\|^4<\infty$. This assumption is probably more
stringent than necessary and a relaxation would be desirable. Note however,
that finite 4th moments are required in \cite{bosq:2000} even for i.i.d.\ $\{\nu_k\}$.

\subsection{Simulation study}\label{ss:sim}

In this section we demonstrate the capabilities of our estimators for $\beta(t,s)$ and $\delta(t)$ on simulated data.  We proceed as follows:  We will choose a simple $\beta(t,s)$ and $\delta(t)$, simulate several days of observations using these parameters, and then use the estimation procedure given in Section \ref{ss:far} to obtain $\hat{\beta}(t,s;2)$ and $\hat{\delta}(t;2)$ from \eqref{e:betahat} and \eqref{e:hat_delta} respectively.\\

We will use $\beta(t,s)=16s(1-s)t(1-t)$ and $\delta(t)=0.01$ for our simulations.  Now that we have chosen $\beta(t,s)$ and $\delta(t)$ we can simulate data according to \eqref{e:y} and \eqref{e:sigma}.  We will use $\varepsilon_i(t) = B_i(t) + N_i\sqrt{1-t(1-t)}$ for the error term, where $B_i(t)$ are iid standard Brownian bridges and $N_i$ are iid standard normals.  Note that this gives $E(\varepsilon^2(t))=1$ for all $t$, which is assumed by our estimation procedure.  After simulating $N$ days of data we compute $\hat{\beta}(t,s;2)$ and $\hat{\delta}(t;2)$.  Figures \ref{f:N30}, \ref{f:N300}, and \ref{f:N3000} show the estimates when $N=30$, $N=300$, and $N=3000$, respectively.  We see from these plots that the estimators described in Section \ref{ss:far} accurately estimate the parameters, $\beta(t,s)$ and $\delta(t)$, when the sample size is sufficiently large.  Note that each plot of $\hat{\delta}(t;2)$ has the true $\delta(t)$ superimposed.  A plot of the true $\beta(t,s)$ is given in figure \ref{f:truebeta}.

\newpage

\begin{figure}[h!]
\includegraphics[width=7cm]{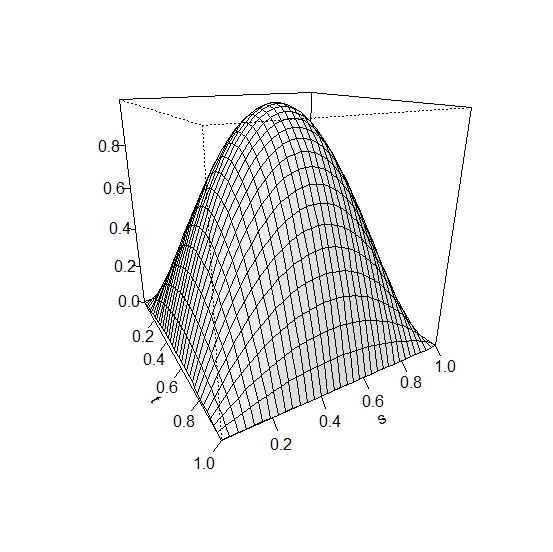}\hfill
\caption{$\beta(t,s)=16s(1-s)t(1-t)$}
\label{f:truebeta}
\vspace{1in}
\includegraphics[width=7cm]{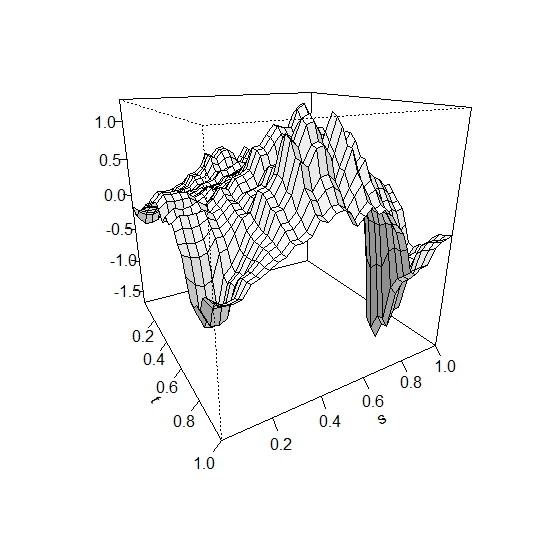}\hfill
\includegraphics[width=7cm]{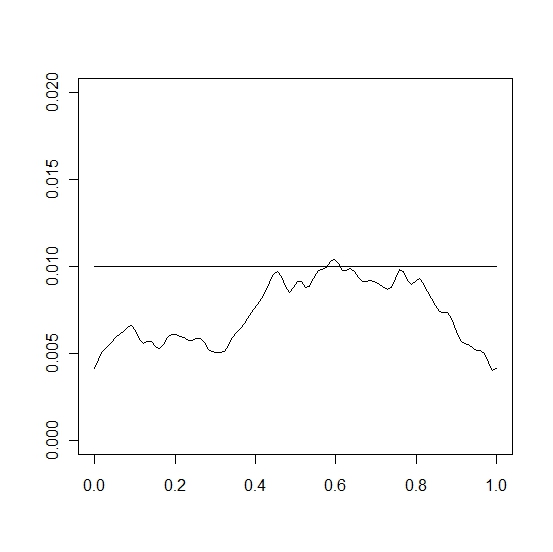}\hfill
\caption{Using a sample of size $N=30$, we obtain $\hat{\beta}(t,s;2)$ on the left and $\hat{\delta}(t;2)$ with $\delta(t)=.01$ superimposed on the right.}
\label{f:N30}

\end{figure}
\newpage

\begin{figure}[h!]

\includegraphics[width=7cm]{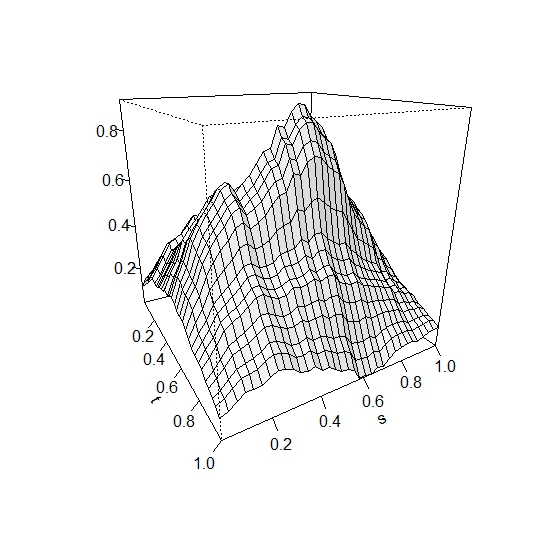}\hfill
\includegraphics[width=7cm]{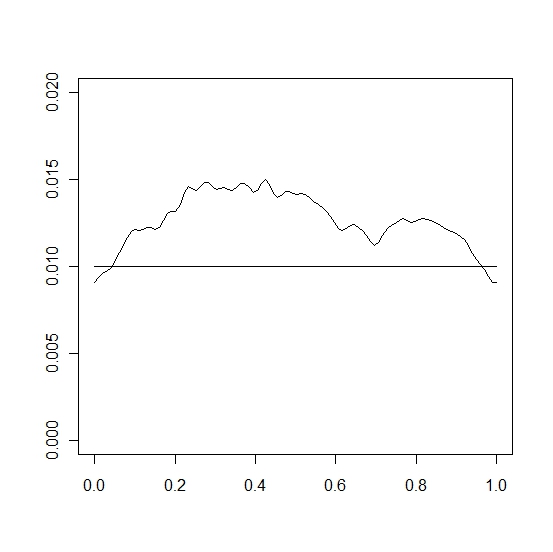}\hfill
\caption{Using a sample of size $N=300$, we obtain $\hat{\beta}(t,s;2)$ on the left and $\hat{\delta}(t;2)$ with $\delta(t)=.01$ superimposed on the right.}
\label{f:N300}
\vspace{1in}
\includegraphics[width=7cm]{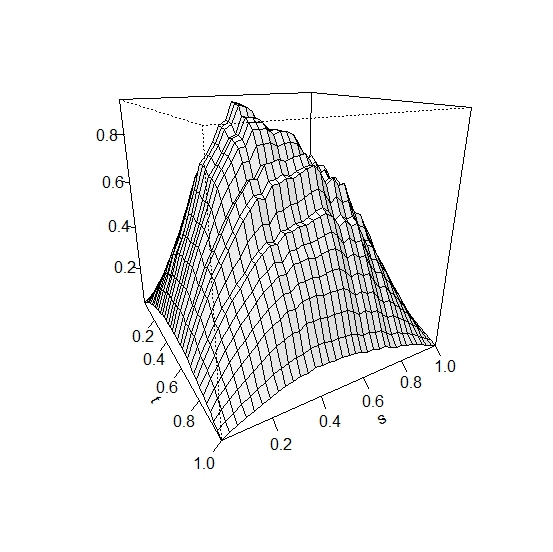}\hfill
\includegraphics[width=7cm]{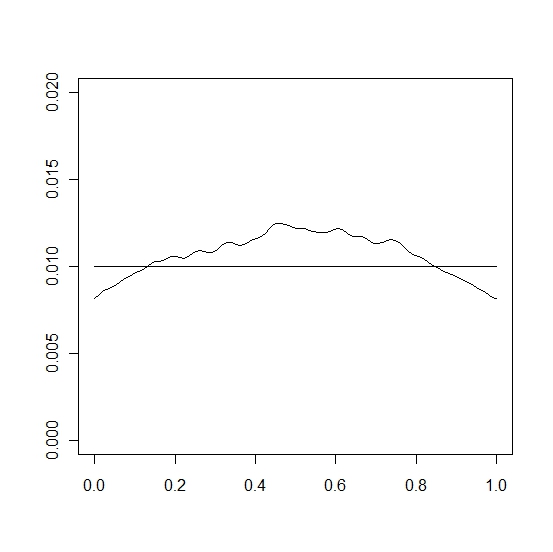}\hfill
\caption{Using a sample of size $N=3000$, we obtain $\hat{\beta}(t,s;2)$ on the left and $\hat{\delta}(t;2)$ with $\delta(t)=.01$ superimposed on the right.}
\label{f:N3000}
\end{figure}
\newpage

\section{An example}\label{s:empirical}

In this section we show an example illustrating that our model captures the basic features of intraday returns. Let $P_k(t)$ denote the price of a stock on day $k$ at time $t$.  Then $y_k(t)$ can be viewed as the log-returns of the stock, $y_k(t)=\log P_k(t) - \log P_k(t-h)$, during period $h$ (cf. Cyree et al.~\cite{cyree:2005}), where $h$ is typically 1, 5, or 15 minutes.  We will use $h=5$ for $5$-minute returns.  The volatility of the stock is then represented by $\sigma_k^2(t)=\mathrm{Var}(y_k(t)|\mathcal{F}_{k-1})$.\\

The first step to simulating the intraday returns is to estimate the parameters, $\delta(t)$ and $\beta(t,s)$, as outlined in Section \ref{ss:far}.  These parameters were estimated for the S\&P 100 index based on data from April 1, 1997 to March 30, 2007.  The estimated functions, $\hat{\beta}(t,s;2)$ and $\hat{\delta}(t;2)$, are shown in Figure \ref{fig:deltabeta}.\\

\begin{figure}[h!]
\includegraphics[width=7cm]{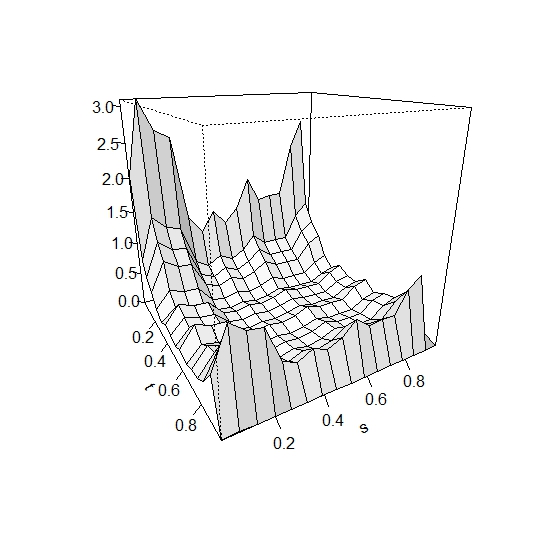}\hfill
\includegraphics[width=7cm]{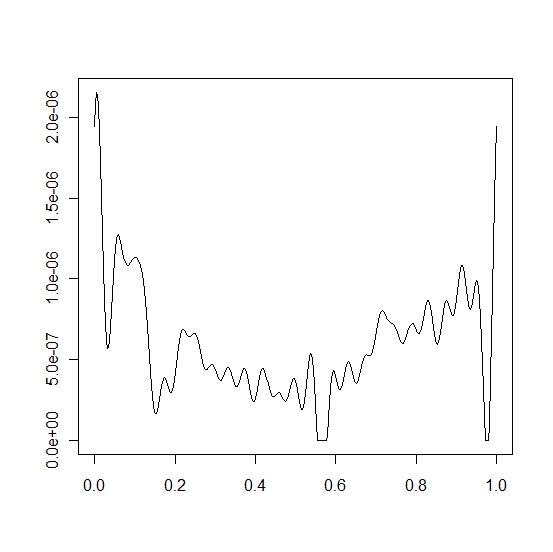}
\caption{Left: $\hat{\beta}(t,s;2)$ estimated from S\&P 100 index.  Right: $\hat{\delta}(t;2)$ estimated from S\&P 100 index.}
\label{fig:deltabeta}
\end{figure}

Notice in Figure \ref{fig:deltabeta} that $\hat{\beta}(t,s;2)$ and $\hat{\delta}(t;2)$ are somewhat larger when $t$ is close to $0$ or $1$.  According to \eqref{e:sigma} this suggests that the volatility, $\sigma_k^2(t)$, tends to be larger at the beginning and end of each trading day.  Higher volatilities at the beginning and the end of the trading day have been observed by several authors (cf.\ Gau~\cite{gau:2005} and Evans and Speight~\cite{evans:2010}).  This phenomenon is consistent with our observed log-return data based on the S\&P 100 index and is captured by our model.

Having estimated the parameters, $\delta(t)$ and $\beta(t,s)$, we can now simulate several days of observations according to \eqref{e:y} and \eqref{e:sigma}.  We will use $\varepsilon_i(t) = 2^{-200t}\sqrt{\log(2)}W_i(2^{400t}/\log(2))$ for the error term, where $W_i(t)$ are iid standard Brownian motions.  Note that this gives $E(\varepsilon^2(t))=1$ for all $t$, which is assumed by our estimation procedure.\\
\newpage

\begin{figure}[h!]
\includegraphics[width=7cm]{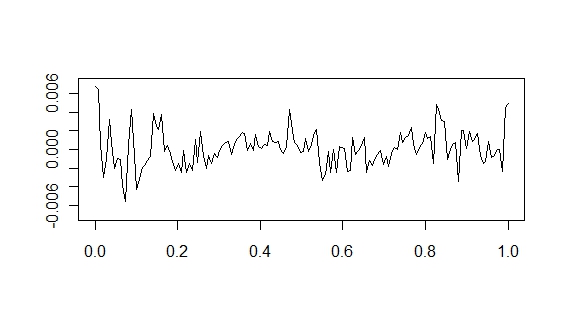}\hfill
\includegraphics[width=7cm]{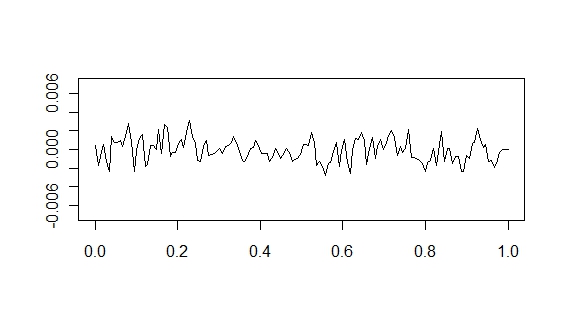}\hfill
\includegraphics[width=7cm]{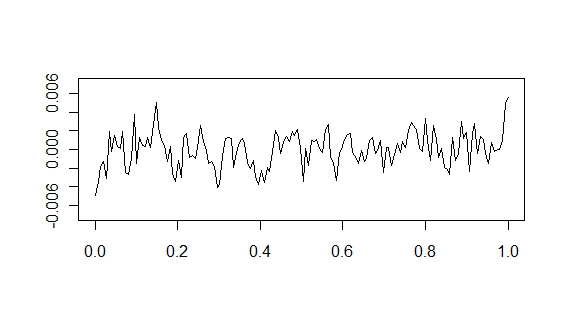}\hfill
\includegraphics[width=7cm]{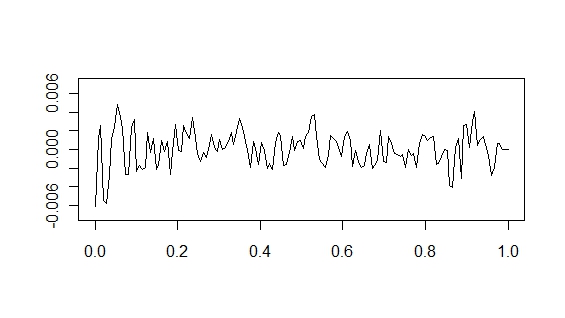}\hfill
\includegraphics[width=7cm]{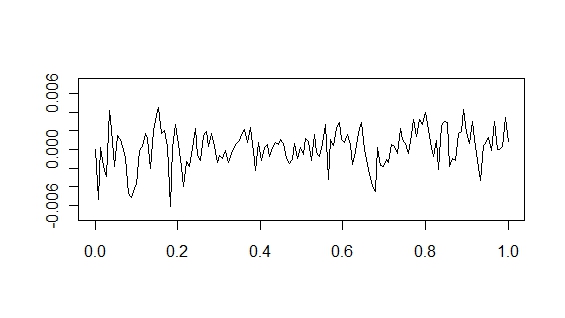}\hfill
\includegraphics[width=7cm]{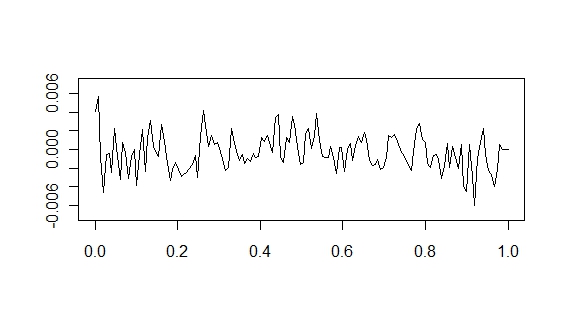}\hfill
\includegraphics[width=7cm]{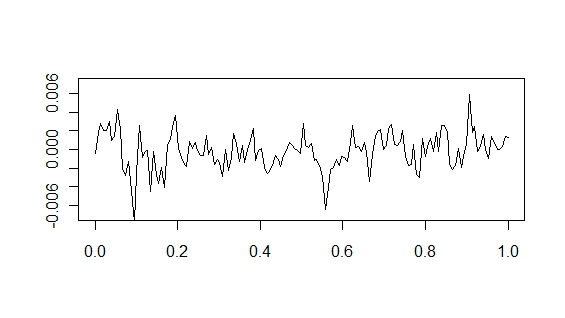}\hfill
\includegraphics[width=7cm]{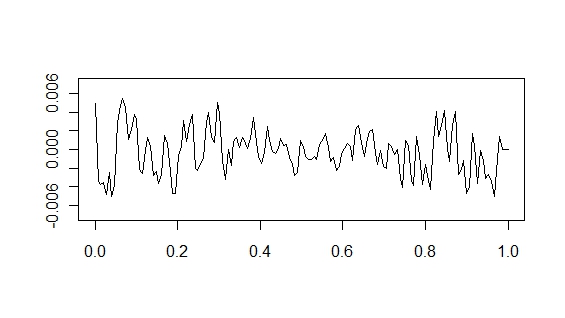}\hfill
\includegraphics[width=7cm]{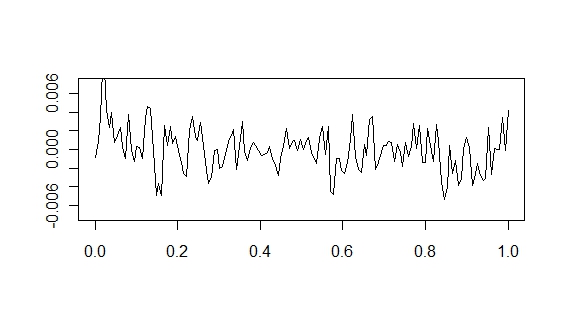}\hfill
\includegraphics[width=7cm]{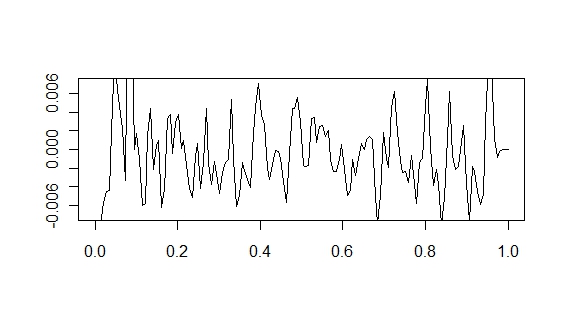}\hfill
\caption{Left panel: Five consecutive days of simulated values for $y_k(t)$ .  Right panel: 5-minute log-returns for the S\&P index between April 11 and April 15, 2000.}
\label{fig:comparison}
\end{figure}

We simulated 5 days of log-returns which we compare with the log-returns of the S\&P 100 index.  The right side of Figure \ref{fig:comparison} is the plot of the 5-minute returns on the S\&P 100 index between April 11 and April 15, 2000. The left side of  Figure \ref{fig:comparison} shows five consecutive days of simulated values for $y_k(t)$.  The simulations show that our model empirically captures the main characteristics of financial data.\\

\section{Proofs}\label{s:proofs}

The proofs of Theorems~\ref{th:Hexist} and \ref{th:Cexist} are based on general results for iterated random functions as those in Wu and Shao~\cite{wu:shoa:2004}
and Diaconis and Freedman~\cite{diaconis:freedman:1999}. For the convenience of the reader
we shall repeat here the main ideas of \cite{wu:shoa:2004}.

Let $(S,\rho)$ be a complete, separable metric space. Let $\Theta$ be another
metric space and let $M:\Theta\times S\to S$ be a measurable function.
For a random element $\theta$ with values in $\Theta$, an
iterated random function system is defined via the random
mappings $M_{\theta}(\cdot)$. More precisely it is assumed that
\begin{equation}\label{e:irfs}
X_n=M_{\theta_n}(X_{n-1}),\quad n\in \mathbb{N},
\end{equation}
where $\{\theta_n,\ -\infty < n < \infty \}$ is an i.i.d.\ sequence with values in $\Theta$.
Thereby it
is assumed that $X_0$ is independent of $\{\theta_n,\, n\geq 1\}$.
For any $x\in S$ we define
$$
S_n(x)=M_{\theta_{n}}\circ M_{\theta_{n-1}}\circ\cdots\circ M_{\theta_1}(x),\quad n=1,2,\ldots,
$$
where $\circ$ denotes the composition of functions. We also introduce the
backward version of $S_n$, which is given by
$$
Z_n(x)=M_{\theta_{-1}}\circ M_{\theta_{-2}}\circ\cdots\circ M_{\theta_{-n}}(x),\quad x\in S,\> n=1,2,\ldots.
$$
The following theorem is a slight modification of Theorem~2 of \cite{wu:shoa:2004}, so that
it is immediately applicable for our purposes.
\begin{Theorem}(Wu and Shao, 2004.)\label{th:wu}
Assume that\\[1ex]
{\em (A)}
there are $y_0\in S$ and $\alpha>0$ such that
$
E\big\{\rho(y_0,M_{\theta_0}(y_0))\big\}^\alpha<\infty
$
and\\[1ex]
{\em (B)} there are $x_0\in S$, $\alpha>0$, $0<r_1=r_1(\alpha)<1$ and
$c=c(\alpha)<\infty$ such that
$$
E\big\{\rho(S_n(x),S_n(x_0))\big\}^\alpha\leq cr_1^n\big\{\rho(x,x_0)\big\}^\alpha
$$
for all $x\in S$ and $n\in \mathbb{N}$.
Then for all $x\in S$ we have $Z_n(x)$ converges almost surely
to some $Z_\infty$ which is independent of $x$. Furthermore
$Z_\infty=g(\theta_0,\theta_{-1},\ldots)$ and
$$
E\big\{\rho(Z_n(x),Z_\infty)\big\}^\alpha\leq c_1r^n,
$$
where $c_1=c_1(x,x_0,y_0,\alpha)<\infty$ and
$0<r=r(\alpha)<1$. Moreover, the process $X_n=g(\theta_n,\theta_{n-1},\ldots)$
is a stationary solution of \eqref{e:irfs}.
Finally, if we let $X_0^*=f(\theta_0',\theta_{-1}',\ldots)$ where
$\{\theta_n'\}$ is an independent copy of
$\{\theta_n\}$, then
$$
E\big\{\rho(S_n(X_0^*),S_n(X_0))\big\}^\alpha\leq c_2r_2^n,
$$
with some $0<r_2=r_2(\alpha)<1$ and $c_2=c_2(\alpha)>0$.
\end{Theorem}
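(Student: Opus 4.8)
The plan is to run the classical \emph{backward iteration} (``coupling from the past'') scheme. Assumption~(B) controls the forward maps $S_n$, but $S_n(x)$ need not converge as $n\to\infty$; the device is that the backward maps $\tilde S_n:=M_{\theta_{-1}}\circ\cdots\circ M_{\theta_{-n}}$ satisfy $Z_n(x)=\tilde S_n(x)$, have the same distribution as $S_n$ (since $(\theta_{-1},\ldots,\theta_{-n})$ and $(\theta_n,\ldots,\theta_1)$ are equidistributed), and yet, unlike the forward maps, $Z_n(x)$ is almost surely Cauchy. I would begin with a preliminary moment bound: combining (A), (B) with $n=1$, the triangle inequality and the elementary inequality $\rho(a,b)^\alpha\le C_\alpha\{\rho(a,c)^\alpha+\rho(c,b)^\alpha\}$ with $C_\alpha=\max(1,2^{\alpha-1})$, one gets $E\{\rho(M_{\theta_0}(x_0),x_0)\}^\alpha<\infty$ and, for every fixed $x$, $E\{\rho(M_{\theta_0}(x),x_0)\}^\alpha\le\kappa(x)<\infty$, where $\kappa(x)$ depends on $x$ only through $\rho(x,x_0)^\alpha$.

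Next I would estimate the increments of the backward chain. Since $Z_{n+1}(x)=\tilde S_n\big(M_{\theta_{-n-1}}(x)\big)$ and $\tilde S_n$ is independent of $\theta_{-n-1}$, splitting through $\tilde S_n(x_0)$, invoking $\tilde S_n\stackrel{\mathcal{D}}{=}S_n$, then (B), and finally the preliminary bound yields $E\{\rho(Z_{n+1}(x),Z_n(x))\}^\alpha\le c'(x)\,r_1^n$. Summing this geometric estimate (treating separately $\alpha\le1$, where one uses subadditivity of $t\mapsto t^\alpha$ and $t\le t^\alpha$ for $t\le1$, and $\alpha>1$, where one uses Jensen's and Minkowski's inequalities) gives $\sum_n\rho(Z_{n+1}(x),Z_n(x))<\infty$ almost surely; hence, by completeness of $(S,\rho)$, $Z_n(x)$ converges a.s.\ to some $Z_\infty(x)$, with $E\{\rho(Z_n(x),Z_\infty(x))\}^\alpha\le c_1 r^n$ for $r=r_1$. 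That the limit does not depend on $x$ follows by taking $Z_\infty:=\lim_n Z_n(x_0)$ and noting that $E\{\rho(Z_n(x),Z_n(x_0))\}^\alpha=E\{\rho(S_n(x),S_n(x_0))\}^\alpha\le cr_1^n\rho(x,x_0)^\alpha\to 0$, so $Z_n(x)\to Z_\infty$ in probability while $Z_n(x)\to Z_\infty(x)$ a.s., forcing $Z_\infty(x)=Z_\infty$ a.s. Since $Z_n(x_0)$ is a measurable function of $(\theta_{-1},\ldots,\theta_{-n})$, the limit $Z_\infty$ is measurable with respect to $\sigma(\theta_{-1},\theta_{-2},\ldots)$, which gives the announced representation $Z_\infty=g(\theta_0,\theta_{-1},\ldots)$ for a measurable $g$.

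I would then set $X_n:=g(\theta_n,\theta_{n-1},\ldots)$. Applying the argument above to the shifted i.i.d.\ sequence $\{\theta_{k+n+1}\}_k$ shows that $M_{\theta_n}\circ\cdots\circ M_{\theta_{n-m+1}}(x_0)\to X_n$ a.s.\ and in $L^\alpha$ at rate $r^m$; stationarity of $\{X_n\}$ is then automatic, it being a fixed measurable image of a stationary sequence. The step I expect to be the real obstacle is checking that $\{X_n\}$ actually \emph{solves} $X_n=M_{\theta_n}(X_{n-1})$, because when $M_\theta$ is only measurable a limit cannot in general be passed through $M_{\theta_n}$. Writing $V_m:=M_{\theta_{n-1}}\circ\cdots\circ M_{\theta_{n-m}}(x_0)$, one has $V_m\to X_{n-1}$ and $M_{\theta_n}(V_m)\to X_n$ in $L^\alpha$ (the latter again by the shifted convergence); conditioning on $\theta_n$, which is independent of the noise defining $V_m$ and $X_{n-1}$, and using the mean-Lipschitz bound $E\{\rho(M_{\theta_0}(a),M_{\theta_0}(b))\}^\alpha\le cr_1\rho(a,b)^\alpha$, one obtains $E\{\rho(M_{\theta_n}(V_m),M_{\theta_n}(X_{n-1}))\}^\alpha\le cr_1\,E\{\rho(V_m,X_{n-1})\}^\alpha\to 0$, so the two $L^\alpha$-limits of $M_{\theta_n}(V_m)$ must coincide and $X_n=M_{\theta_n}(X_{n-1})$ a.s. (This step uses the pairwise form of (B), available in all applications of the theorem in this paper, where $M_\theta$ is affine and hence Lipschitz; alternatively, a.s.\ continuity of $M_\theta$ would let one pass the limit directly.)

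For the final assertion, note that $X_0$ and $X_0^*=f(\theta_0',\theta_{-1}',\ldots)$ are both independent of $(\theta_1,\ldots,\theta_n)$, and that $E\{\rho(X_0,x_0)\}^\alpha<\infty$ (from the rate bound with $x=x_0$ together with the triangle inequality) and $E\{\rho(X_0^*,x_0)\}^\alpha<\infty$ (which holds when, as in the applications, $X_0^*\stackrel{\mathcal{D}}{=}X_0$). Conditioning on $X_0$ and on $X_0^*$ in turn and splitting $\{\rho(S_n(X_0^*),S_n(X_0))\}^\alpha$ through $S_n(x_0)$, assumption (B) gives $E\{\rho(S_n(X_0^*),S_n(X_0))\}^\alpha\le C_\alpha c\,r_1^n\big(E\{\rho(X_0^*,x_0)\}^\alpha+E\{\rho(X_0,x_0)\}^\alpha\big)$, which is at most $c_2 r_2^n$ with $r_2=r_1$.
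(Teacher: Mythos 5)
The paper does not prove this theorem at all: it is imported, with only a remark that it is a slight modification of Theorem~2 of Wu and Shao (2004), so there is no in-paper argument to compare against. Your backward-iteration (coupling-from-the-past) proof is exactly the standard argument behind the cited result, and the steps you give — the preliminary moment bound from (A) and (B) with $n=1$, the geometric bound on $\rho(Z_{n+1}(x),Z_n(x))$ via the distributional identity $\tilde S_n\isd S_n$ and independence of $\theta_{-n-1}$, the Borel--Cantelli/Minkowski summation, the identification of the limit, and the conditioning argument for the final coupling inequality — are all correct. The one point you flag is indeed the genuine subtlety: under the literal hypotheses (measurable $M_\theta$, contraction anchored only at $x_0$) the pathwise claim $X_n=M_{\theta_n}(X_{n-1})$ does not follow — one can build simple examples satisfying (A) and (B) where the backward limit is constant but is not a fixed point of the maps — so some extra regularity (a pairwise mean-contraction bound or a.s.\ continuity of $M_\theta$) is needed, and it is available in every application in this paper, where $M_{\theta}$ is an affine bounded map; your explicit acknowledgement and patch of this step is appropriate rather than a defect of your proof. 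Your reading of $X_0^*$ (taking $f=g$, so $X_0^*\isd X_0$ and the moment bound transfers) is also the intended one.
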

\begin{proof}[Proof of Theorem~\ref{th:Hexist}.]
We need to show that the conditions of Theorem~\ref{th:wu} are
satisfied when the underlying space is $\mathcal{H}$ with metric $\|\cdot\|_\mathcal{H}$ and
$$
M_{\theta_n}(x)(t)=\delta(t)+\int\beta(t,s)\varepsilon_{n-1}^2(s)x(s)ds.
$$
To demonstrate {\em (A)} of Theorem~\ref{th:wu} we use $y_0(t)=0$, $0\leq t\leq 1$,
and get
$$
\big\|y_0-M_{\theta_0^2}(y_0)\big\|_\mathcal{H}^2=\int\delta^2(t)dt<\infty,
$$
by assumption. Since for any $x,x_0\in \mathcal{H}$ we have
\begin{align*}
\big\|S_n(x)-S_n(x_0)\big\|_\mathcal{H}&=\big\|M_{\theta_n^2}(S_{n-1}(x))-
M_{\theta_n^2}(S_{n-1}(x_0))\big\|_\mathcal{H}\\
&=\bigg(\int\bigg(\int\beta(t,s)\big\{S_{n-1}(x)(s)-S_{n-1}(x_0)(s)\big\}
\varepsilon_{n-1}^2(s)ds\bigg)^2dt\bigg)^{1/2}\\
&\leq \bigg(\int\bigg\{\int\beta^2(t,s)\varepsilon_{n-1}^4(s)ds\bigg\}
\int\bigg\{S_{n-1}(x)(s)-S_{n-1}(x_0)(s)\bigg\}^2
dsdt\bigg)^{1/2}\\
&=K(\varepsilon_{n-1}^2)\,\big\|S_{n-1}(x)-S_{n-1}(x_0)\big\|_\mathcal{H},
\end{align*}
by the Cauchy-Schwarz inequality. Repeating the arguments above, we conclude
$$
\big\|S_n(x)-S_n(x_0)\big\|_\mathcal{H}\leq \big\|x-x_0\big\|_\mathcal{H}\prod_{i=0}^{n-1}K(\varepsilon_i^2).
$$
Taking expectations on both sides and using the independence of the $\varepsilon_i$
proves {\em (B)}.
\end{proof}

Theorem~\ref{th:Hweak} is a simple corollary to Theorem~\ref{th:Hexist} and
Theorem~\ref{th:wu}. Theorem~\ref{th:Cexist} can be proven along the same lines
of argumentation and the proof is omitted.

\begin{proof}[Proof of Propositions~\ref{p:momentsH} and \ref{p:momentsC}.]
First we establish \eqref{e:momH1}. We follow the proof of Theorem~\ref{th:Hexist}.
Since $E\big\{\|\sigma_0^2\|_\mathcal{H}\big\}^\alpha=E\big\{\|Z_\infty\|_\mathcal{H}\big\}^\alpha$, according
to the construction in the proof of Theorem~\ref{th:Hexist} we have
\begin{align*}
E\big\{\|\sigma_0^2\|_\mathcal{H}\big\}^\alpha & =  E\big\{\|Z_\infty\|_\mathcal{H}\big\}^\alpha\\
& \leq  E\big\{\|Z_1(0)\|_\mathcal{H}+\|Z_1(0)-Z_\infty(0)\|_\mathcal{H}\big\}^\alpha\\
&\leq 2^\alpha\Big\{ E\big\{\|Z_1(0)\|_\mathcal{H}\big\}^\alpha+E\big\{\|Z_1(0)-Z_\infty(0)\|_\mathcal{H}\big\}^\alpha\Big\},
\end{align*}
where $0$ denotes the "zero function" on $[0,1]$. According the
proof of Theorem~\ref{th:Hexist} and Theorem~\ref{th:wu}
the term $E\big\{\|Z_1(0)-Z_{\infty}(0)\|_\mathcal{H} \big\}^\alpha<\infty$. Furthermore,
the term $E\big\{\|Z_1(0)\|_\mathcal{H}\big\}^\alpha=\Big(\int\delta^2(t)dt\Big)^{\frac{\alpha}{2}}<\infty$.
To show \eqref{e:momH3}, we note that
\begin{align*}
E\big\{\|y_0\|_\mathcal{H}\big\}^\alpha & =  E\bigg[\int y_0^2(t)dt\bigg]^{\frac{\alpha}{2}}\\
& =  E\bigg[\int \varepsilon_0^2(t)\sigma_0^2(t)dt\bigg]^{\frac{\alpha}{2}}\\
&\leq E\big\{\|\varepsilon_0\|_\infty\big\}^\alpha E\big\{\|\sigma_0\|_\mathcal{H}\big\}^{\alpha},
\end{align*}
since $\varepsilon_0$ and $\sigma_0$ are independent processes. Proposition~\ref{p:momentsH}
is proven.

The proof of Proposition~\ref{p:momentsC} only requires minor modifications and is therefore
omitted.
\end{proof}

\begin{proof}[Proof of Proposition~\ref{p:continuity}.] Using recursion \ref{e:y}
we have
\begin{align*}
\omega(y_0,h)&=\sup_{0\leq t\leq 1-h}\sup_{0\leq s\leq h}|\varepsilon_0(t+s)\sigma_0(t+s)-
\varepsilon_0(t)\sigma_0(t)|\\
&\leq\sup_{0\leq t\leq 1-h}\sup_{0\leq s\leq h}\big\{|\varepsilon_0(t+s)||\sigma_0(t+s)-
\sigma_0(t)|+|\sigma_0(t)||\varepsilon_0(t+s)-
\varepsilon_0(t)|\big\}\\
&\leq \|\varepsilon_0\|_\infty\omega(\sigma_0,h)+\|\sigma_0\|_\infty\omega(\varepsilon_0,h).
\end{align*}
The independence of $\varepsilon_0$ and $\sigma_0$ yields
$$
E\big\{\|\sigma_0\|_\infty\omega(\varepsilon_0,h)\big\}^p=E\big\{\|\sigma_0\|_\infty\big\}^p
E\big\{\omega(\varepsilon_0,h)\big\}^p.
$$
Proposition~\ref{p:momentsC} gives $E\big\{\|\sigma_0^2\|_\infty\big\}^p<\infty$.  This implies that $E\big\{\|\sigma_0\|_\infty\big\}^p<\infty$
and therefore
$$
\lim_{h\to 0}E\big\{\|\sigma_0\|_\infty\omega(\varepsilon_0,h)\big\}^p=0.
$$
The identity $|\sqrt{a}-\sqrt{b}|\leq \sqrt{|a-b|}$, $a,b\geq 0$, implies
\begin{align*}
\omega^p(\sigma_0,h)&=\sup_{0\leq t\leq 1-h}\sup_{0\leq s\leq h}|\sigma_0(t+s)-\sigma_0(t)|^p\\
&\leq\left(\sup_{0\leq t\leq 1-h}\sup_{0\leq s\leq h}|\sigma_0^2(t+s)-\sigma_0^2(t)|\right)^{\frac{p}{2}}.
\end{align*}
Recursion \eqref{e:sigma} gives
\begin{align*}
|\sigma_0^2(t+s)-\sigma_0^2(t)|&\leq |\delta(t+s)-\delta(t)|+
\bigg|\int(\beta(t+s,r)-\beta(t,r))y_{-1}^2(r)dr\bigg|\\
&\leq \omega(\delta,h)+
\sup_{0\leq t\leq 1-h}\sup_{0\leq s\leq h}\sup_{0\leq r\leq 1}|
\beta(t+s,r)-\beta(t,r)|\times\int y_{-1}^2(r)dr.
\end{align*}
Hence
\begin{align*}
E\left(\omega^p(\sigma_0,h)\right)&= E\big\{\sup_{0\leq t\leq 1-h}\sup_{0\leq s\leq h}|\sigma_0^2(t+s)-\sigma_0^2(t)|\big\}^{\frac{p}{2}}\\
&\leq 2^{\frac{p}{2}}\Big\{
\big[
\sup_{0\leq t\leq 1-h}\sup_{0\leq s\leq h}\sup_{0\leq r\leq 1}
|\beta(t+s,r)-\beta(t,r)|
\big]^{\frac{p}{2}}\times E\big\{\|y_0\|_\infty\}^p+\big[\omega(\delta,h)\big]^{\frac{p}{2}}\Big\}.
\end{align*}
Proposition~\ref{p:momentsC} yields that $E\big\{\|y_0\|_\infty\big\}^p<\infty$
and $E\big\{\|\sigma_0\|_\infty\big\}^p<\infty$. So by the independence of the processes
$\varepsilon_0$ and $\sigma_0$ we conclude
$$
\lim_{h\to 0}\big\{\|\varepsilon_0\|_\infty\omega(\sigma_0,h)\big\}^p=0,
$$
completing the proof of Proposition~\ref{p:continuity}.
\end{proof}

\begin{proof}[Proof of Theorem~\ref{th:consist}]
Under our assumptions it follows from Theorem~\ref{th:Hweak} that for any $m\geq 1$
$$
E\|Z_k-Z_{km}\|_\mathcal{H}^4\leq \mathrm{const}\times r^m
$$
where $r\in (0,1)$ and $Z_{km}$ are the $m$--dependent approximations
of $Z_k$ (constructed by using $\sigma_{km}^2$ instead of $\sigma_k^2$ in
the definition of $Z_k$). This shows that the notion of {\em $L^4$--$m$--approximability} suggested in H\"ormann and Kokoszka~\cite{hoermann:kokoszka:2010}
applies to the sequence $\{Z_{k}\}$. As consequence we have with
$\hat c_i=\mathrm{sign}\langle \hat{e}_i,e_i\rangle$ that
\begin{align*}
(a)&\quad \max_{1\leq i\leq K}E\|\hat{c}_i\hat{e}_i-e_i\|_\mathcal{H}^2=O\left(N^{-1}\right);\\
(b)&\quad \max_{1\leq i\leq K}E|\hat\lambda_i-\lambda_i|^2=O\left(N^{-1}\right)
\end{align*}
and therefore that
\begin{align*}
(a')&\quad \max_{1\leq i\leq K}\|\hat{c}_i\hat{e}_i-e_i\|_\mathcal{H}=O_P\left(N^{-1/2}\right);\\
(b')&\quad \max_{1\leq i\leq K}|\hat\lambda_i-\lambda_i|=O_P\left(N^{-1/2}\right)
\end{align*}
(See \cite[Theorem~3.2]{hoermann:kokoszka:2010}.)
The random sign $\hat c_i$ (which we cannot observe) accounts for the
fact that $e_i$ can be only uniquely identified up to its sign. As our estimator
$\hat\beta(K)$ doesn't depend on the signs of the $\hat e_i$, this poses no
problem. We define
$$
\sigma_{i,j}=E\langle Z_1,e_i\rangle\langle Z_2,e_j\rangle
$$
and let
$$
\hat{\sigma}_{i,j}=\frac{1}{N-1}\sum_{k=1}^{N-1}\langle Z_k,\hat{e}_i\rangle\langle Z_{k+1}, 
\hat{e}_j\rangle
$$
be the empirical counterpart. Then we have
\begin{align*}
E|\sigma_{i,j}-\hat c_i\hat c_j\hat{\sigma}_{i,j}|&\leq E\left|\frac{1}{N-1}\sum_{k=1}^{N-1}
\left(\langle Z_k,{e}_i\rangle\langle Z_{k+1},{e}_j\rangle-
E\langle Z_k,{e}_i\rangle\langle Z_{k+1},{e}_j\rangle\right)\right|\\
&\quad+E\left|
\frac{1}{N-1}\sum_{k=1}^{N-1}\left(\langle Z_k,\hat c_i\hat{e}_i\rangle\langle Z_{k+1},\hat c_j\hat{e}_j\rangle
-\langle Z_k,{e}_i\rangle\langle Z_{k+1},{e}_j\rangle
\right)
\right|\\
&\quad=:T_1(i,j;N)+T_2(i,j;N).
\end{align*}
The processes
$\mathcal{Z}_k=\mathcal{Z}_k(i,j)=\langle Z_k,{e}_i\rangle\langle Z_{k+1},{e}_j\rangle$
are strictly stationary for every choice of $i$ and $j$ and we can
again define the approximations $\mathcal{Z}_{km}$ in the spirit of
Section~\ref{s:farch}. We have by independence of $\mathcal{Z}_0$ and
$\mathcal{Z}_{kk}$
\begin{align*}
\sum_{h\geq 0}|\mathrm{Cov}(\mathcal{Z}_0,\mathcal{Z}_h)|&\leq
E\mathcal{Z}_0^2+\left(E\mathcal{Z}_0^2\right)^{1/2}\times
\sum_{h\geq 1}\left(E(\mathcal{Z}_h-\mathcal{Z}_{hh})^2\right)^{1/2}\\
&\leq E\|Z_0\|_\mathcal{H}^4+E\|Z_0\|_\mathcal{H}^2\times
\sum_{h\geq 1}\left(E(\mathcal{Z}_h-\mathcal{Z}_{hh})^2\right)^{1/2}.
\end{align*}
Further we have by repeated application of the Cauchy-Schwarz
inequality that
\begin{align*}
E(\mathcal{Z}_h-\mathcal{Z}_{hh})^2&=
E\left[\langle Z_h,e_i\rangle\langle Z_{h+1},e_j\rangle-
\langle Z_{hh},e_i\rangle\langle Z_{h+1,h},e_j\rangle\right]^2\\
&\quad \leq
2\left\{
E\left[
\langle Z_h-Z_{hh},e_i\rangle\langle Z_{h+1},e_j\rangle
\right]^2+
E\left[\langle Z_{hh},e_i\rangle\langle Z_{h+1}-Z_{h+1,h},e_j\rangle\right]^2
\right\}\\
&\quad \leq
2\left\{
E\langle Z_h-Z_{hh},e_i\rangle^2E\langle Z_h,e_j\rangle^2+
E\langle Z_{hh},e_i\rangle^2 E\langle Z_{h+1}-Z_{h+1,h},e_j\rangle^2
\right\}\\
&\quad\leq 2E\|Z_{0}\|^2_\mathcal{H} \left\{
E\| Z_h-Z_{hh}\|_\mathcal{H}^2 +
E\| Z_{h+1}-Z_{h+1,h}\|_\mathcal{H}^2
\right\}\\
&\quad\leq  \mathrm{const}\times r^h,
\end{align*}
for some $r\in (0,1)$. This proves that the autocovariances of
the process $\{\mathcal{Z}_k\}$ are absolutely summable.
A well known result in time series analysis thus implies that
$$
(N-1)\mathrm{Var}\left(\frac{1}{N-1}\sum_{k=1}^{N-1}\mathcal{Z}_k\right)\leq
2\sum_{h\geq 0}|\mathrm{Cov}(\mathcal{Z}_0,\mathcal{Z}_h)|\leq c_0,\quad\forall N\geq 2,
$$
(see e.g.\ the proof of Theorem~7.1.1.\ in Brockwell and Davis~\cite{brockwell:davis:1991})
where, as we have shown,
the constant $c_0$ is independent of the choice of $i$ and $j$ in
the definition of $\mathcal{Z}_k$. Hence $\max_{1\leq i,j\leq K}T_1(i,j;N)=O_P\left(N^{-1/2}\right)$.

Using relation ($a$) above, one can show that also
$\max_{1\leq i,j\leq K}T_2(i,j;N)=O_P\left(N^{-1/2}\right)$.
We thus have
\begin{align*}
(c)\quad \max_{1\leq i,j\leq K}|\sigma_{i,j}-\hat c_i\hat c_j\hat{\sigma}_{i,j}|=O_P\left(N^{-1/2}\right).
\end{align*}

We have now the necessary tools to prove Theorem~\ref{th:consist}. By
relations ($a'$), ($b'$) and ($c$) we have that
\begin{align*}
&\left\|\beta(K)-\hat\beta(K)\right\|_\mathcal{S}=
\left\|\sum_{1\leq i,j\leq K}\left(
\frac{\sigma_{j,i}}{\lambda_j} e_j\otimes e_i-
\frac{\hat\sigma_{j,i}}{\hat\lambda_j} \hat e_j\otimes \hat e_i
\right)\right\|_\mathcal{S}\\
&\quad\leq\sum_{1\leq i,j\leq K}\left\{\left|
\frac{\sigma_{j,i}}{\lambda_j}-
\frac{\hat c_j\hat c_i\hat\sigma_{j,i}}{\hat\lambda_j}
\right|+\left|
\frac{\sigma_{j,i}}{\hat\lambda_j}\right|\left\| e_j\otimes e_i-
 \hat c_j\hat e_j\otimes \hat c_i\hat e_i
\right\|_\mathcal{S}\right\}\\
&\quad\leq K^2\left\{O_P\left(N^{-1/2}\right)+O_P(1)\max_{1\leq i,j\leq K}\left\| e_j\otimes e_i-
 \hat c_j\hat e_j\otimes \hat c_i\hat e_i
\right\|_\mathcal{S}\right\}.
\end{align*}
The proof follows from
$
\left\| e_j\otimes e_i-
\hat c_j \hat e_j\otimes \hat c_i\hat e_i
\right\|_\mathcal{S}\leq \|\hat c_j\hat e_j- e_j\|_\mathcal{H}+
\|\hat c_i\hat e_i- e_i\|_\mathcal{H}.
$
\end{proof}

\end{document}